\documentclass[12pt]{amsart}
\usepackage{amsmath,amssymb,mathrsfs}

\usepackage[ps2pdf,colorlinks=true,urlcolor=blue,
citecolor=red,linkcolor=blue,linktocpage,pdfpagelabels,bookmarksnumbered,bookmarksopen]{hyperref}
\usepackage[english]{babel}

\usepackage[left=2.9cm,right=2.9cm,top=2.75cm,bottom=2.75cm]{geometry}

\numberwithin{equation}{section}

\newcommand{\R}{{\mathbb R}}

\newcommand{\eps}{\varepsilon}

\renewcommand{\theta}{\vartheta}

\numberwithin{equation}{section}
\newtheorem{theorem}{Theorem}[section]
\newtheorem{proposition}[theorem]{Proposition}
\newtheorem{lemma}[theorem]{Lemma}
\newtheorem{remark}[theorem]{Remark}

\newtheorem{definition}[theorem]{Definition}
\theoremstyle{definition}

\title[Symmetry of minimizers for quasi-linear problems]{On the symmetry of minimizers in \\ 
constrained quasi-linear problems}

\author{Marco Squassina}
\address{Dipartimento di Informatica
\newline\indent
Universit\`a degli Studi di Verona
\newline\indent
C\'a Vignal 2, Strada Le Grazie 15, I-37134 Verona, Italy}
\email{marco.squassina@univr.it}

\thanks{Research partially supported by PRIN: {\em Metodi Variazionali e Topologici
nello Studio di Fenomeni non Lineari}}

\begin{document}

\subjclass[2000]{74G65; 35J62; 35A15; 35B06; 58E05}

\keywords{Symmetrization techniques, quasi-linear elliptic equations, non-smooth analysis}

\begin{abstract}
We provide a simple proof of the radial symmetry of any nonnegative minimizer for a general
class of quasi-linear minimization problems.
\end{abstract}
\maketitle

\section{Introduction and main result}

Let $\Omega$ be either $\R^N$ or a ball $B_R(0)$ centered at the origin in $\R^N$, and define the 
functional ${\mathcal E}:W^{1,p}_0(\Omega)\to\R\cup\{+\infty\}$, $1<p<N$, by setting
$$
{\mathcal E}(u)
=\int_\Omega j(u,|Du|)-\int_\Omega F(|x|,u).
$$
Moreover, let ${\mathcal C}\subset W^{1,p}_0(\Omega)$ be a constraint given by
\begin{equation}
	\label{ilvincolo}
{\mathcal C}=\Big\{u\in W^{1,p}_0(\Omega): \,\,\int_\Omega G(u)=1 \Big\}.
\end{equation}
Let us consider the following minimization problem 
\begin{equation}
	\label{minpb}
m=\inf_{v\in {\mathcal C}}{\mathcal E}(v), \quad -\infty<m<+\infty.
\end{equation}
A classical problem in the Calculus of Variations is to establish the existence of a solution
to problem~\eqref{minpb} and, in addition, to detect further qualitative properties of the solutions
such as their radial symmetry and monotonicity~\cite{brezis}. 
The existence of solutions was extensively investigated, starting from the seminal 
contributions of {\sc Lions} \cite{lions1,lions2}.
The main strategies followed to achieve the
latter goal are, on one hand, the moving plane method by {\sc Gidas}, {\sc Ni} and {\sc Nirenberg}~\cite{gnn} and, 
on the other, the symmetrization techniques, initiated by {\sc Steiner} and {\sc Schwarz} for sets, 
for which we refer the reader to the monographs~\cite{bandle,kawohl,mossino} and the classic~\cite{polya}.
For the semi-linear case $p=2$, $j(s,t)=|t|^2$ and $F=0$, a pioneering study was performed by
{\sc Berestycki} and {\sc Lions} in the celebrated paper~\cite{berlio}.
General radial symmetry results for $j(s,t)=|t|^2$ have been obtained by {\sc Lopes} in~\cite{lopes} via a reflection 
argument and a unique continuation principle. 
For $j(s,t)=|t|^p$, interesting results have been achieved by {\sc Brock} in~\cite{brock0}
by exploiting rearrangements and strong maximum principle.  
For further relevant generalization of these contributions, we refer to the recent work of {\sc Mari\c s}~\cite{maris}.
The works \cite{brock0,lopes,maris} include the case of systems as well and \cite{brock0,maris} 
also allow multiple constraints (very general in~\cite{maris}).
The existence of a Schwarz symmetric solution of problem~\eqref{minpb}
under general assumptions on $F$ and $j(u,|Du|)$, allowing growth conditions such as 
$$
\alpha_0|Du|^p\leq j(u,|Du|)\leq \alpha(|u|)|Du|^p,\quad\,\, \alpha_0>0,\,\,\,
\text{$\alpha:\R^+\to\R^+$ continuous},
$$
has been recently established by virtue of generalized {\sc P\'olya-Szeg\"o} inequalities \cite{HSq}.
In this paper, focusing on the highly quasi-linear character of our minimization problem, we want to provide, 
under rather weak assumptions, a quite simple proof that {\em any} given nonnegative 
minimum $v$ of~\eqref{minpb} is radially symmetric and decreasing, after a translation, if the
set of critical points of $v^*$ has null Lebesgue measure. In general, 
assuming for instance that $j$ is convex in the gradient and $F$ behaves smoothly,
${\mathcal E}$ is non-smooth unless $j_u=0$ and, depending upon the growth estimates
on $j$, it can be either continuous (if $\alpha$ is bounded from above) or lower semi-continuous.
In turn, quite often, techniques of non-smooth analysis are employed.

Given a nonnegative solution $v$ to~\eqref{minpb}, 
the idea is to construct a related sequence $(v_n)$ (built up by repeatedly polarizing 
$v$) which is weakly convergent to the Schwarz symmetrization $v^*$ of $v$ in 
$W^{1,p}_0(\Omega)$. Then, since $(v_n)$ are also solutions to~\eqref{minpb} they satisfy an Euler-Lagrange
equation in a suitable generalized sense (see Section~\ref{nonsmoothsect} and, in particular,
Proposition~\ref{semic-lagrange}) obtained by tools of 
subdifferential calculus for nonsmooth functionals developed by {\sc Campa} and {\sc Degiovanni} in~\cite{campad}. This allows, in turn, to 
prove the almost everywhere convergence of the gradients $Dv_n$ to $Dv^*$ by applying a powerful result due to {\sc Dal Maso} 
and {\sc Murat}~\cite{masmur} to a suitable sequence of Leray-Lions type operators associated with $j(v_n,|Dv_n|)$.
Finally, this leads to the identity $\|Dv\|_{L^p(\Omega)}=\|Dv^*\|_{L^p(\Omega)}$ 
which provides the desired conclusion that $v$ is nothing but a translation of $v^*$. 
We stress that, in proving the main result, we never use any form of the strong maximum principle
or unique continuation principle.
Identity cases for the $p$-Laplacian have been
deeply studied since the first pioneering contributions due to {\sc Friedman} and {\sc McLeod} \cite{fmc}
and to {\sc Brothers} and {\sc Ziemer} \cite{bzim}. For some recent developments, extensions and new simplified proofs, we refer the 
reader to the works of {\sc Ferone} and {\sc Volpicelli} (see~\cite{FV,FV1} covering both the case of $\R^N$ and of 
a bounded domain). 

Beyond the study of minima, for an investigation of radial symmetry of {\em minimax} critical points 
for a class of quasi-linear problems on the ball associated with lower
semi-continuous functionals involving $j(u,|Du|)$, we refer 
to~\cite{squassi-radial} (see also~\cite{jvsh1} for the case of $C^1$ functionals). 
We also refer to the monograph~\cite{sq-monograph} and to the references therein
for a wide range of results on quasi-linear problems obtained via non-smooth critical point theory.
\vskip4pt
\noindent
Throughout the paper, the spaces $L^q(\Omega)$ and $W^{1,p}_0(\Omega)$, for every $p,q\geq 1$, will be 
endowed, respectively, both for $\Omega=B_R(0)$ or $\Omega=\R^N$, with the usual norms 
$$
\|u\|_{L^q(\Omega)}=\Big(\int_\Omega |u|^q\Big)^{1/q},\quad\,
\|u\|_{W^{1,p}_0(\Omega)}=\Big(\|u\|^p_{L^p(\Omega)}
+\sum_{j=1}^N\|D_ju\|^p_{L^p(\Omega)}\Big)^{1/p}.
$$ 
\vskip2pt
\noindent
Next we formulate the assumptions under which our main result will hold.

\subsubsection{Assumptions on $j$}
For every $s$ in $\R$, the function ($t\in\R^+$)
\begin{equation}\label{j1}
\text{$\{t\mapsto j(s,t)\}$
is strictly convex and increasing}.
\end{equation}
The functions $j_s$ and $j_t$ and $j_{st}$
denote the derivatives of $j(s,t)$ with
respect to the variables $s$ and $t$ and the mixed derivative
respectively, which exist continuous. We assume that there exist a positive constant $\alpha_0$ and
increasing functions $\alpha,\,\beta,\,\gamma\in C(\R^+,\R^+)$ such that
\begin{align}
	\label{j2}
\alpha_0|\xi|^p\leq  j(s,|\xi|) &\leq \alpha(|s|)|\xi|^p, \qquad\text{for every $s$ in $\R$ and $\xi\in \R^N$,} \\
	\label{j3}
|j_s(s,|\xi|)| &\leq \beta(|s|)|\xi|^p,  \qquad\text{for every $s$ in $\R$ and $\xi\in \R^N$,} \\
\label{j4}
|j_t(s,|\xi|)| &\leq \gamma(|s|)|\xi|^{p-1}, \qquad\text{for every $s$ in $\R$ and $\xi\in \R^N$.}
\end{align}

\subsubsection{Assumptions on $F$}
$F(|x|,s)$ is the primitive with respect to $s$ of a Carath\'eodory function $f(|x|,s)$
with $F(|x|,0)=0$. Denoting $p^*=Np/(N-p)$, we assume that there exist a positive constant $C$ and
a radial function $a\in L^{Np/(N(p-1)+p)}(\Omega)$ such that
\begin{align}
\label{gg1}
|f(|x|,s)| &\leq a(|x|)+C|s|^{p^*-1},\qquad\text{for every $s$ in $\R$ and $x\in \Omega$,} \\
\label{gg5}
f(|x|,s) &\geq f(|y|,s),\qquad\text{for every $s\in\R^+$ and $x,y\in \Omega$ with $|x|\leq |y|$}.
\end{align}

\subsubsection{Assumptions on $G$}
$G(s)$ is the primitive with respect to $s$ of a continuous function $g$
with $G(0)=0$. Moreover, there exists a positive constant $C$ such that
\begin{align}
\label{gg1-G}
|g(s)| &\leq C|s|^{p-1}+C|s|^{p^*-1},\qquad\text{for every $s$ in $\R$,} \\
\label{nondegen}
&\text{if $u\in W^{1,p}_0(\Omega)$ and $u\not\equiv 0$,\, then $g(u)\not\equiv 0$}.
\end{align}
For a given positive $u\in W^{1,p}_0(\Omega)$, we consider the set
$$
C^*:=\{x\in\Omega:|D u^*(x)|=0\}\cap (u^*)^{-1}(0,{\rm esssup}\, u).
$$
\noindent
Under the previous assumptions \eqref{j1}-\eqref{gg1-G}, the main result of the paper is the following

\begin{theorem}
	\label{mainthALL}
Assume that	$\Omega$ is either $\R^N$ or a ball $B_R(0)\subset \R^N$ and
let $u\in {\mathcal C}$ be any nonnegative solution to~\eqref{minpb} such that
${\mathcal L}^N(C^*)=0$. Then, after a translation, $u=u^*$.
\end{theorem}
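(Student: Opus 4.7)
The plan is to follow the four-step roadmap announced in the introduction. The first step is to construct a polarized approximating sequence: by the classical theorem on approximation of Schwarz symmetrization via polarizations, fix a sequence $\{H_n\}$ of affine half-spaces each containing the origin such that the iterated polarizations $v_n := u^{H_1 H_2 \cdots H_n}$ converge to $u^*$ strongly in $L^p(\Omega)$ and weakly in $W^{1,p}_0(\Omega)$. Since each polarization is equimeasurable with $u$ and preserves both the $L^p$ norm of $|Du|$ and the compositions with Carath\'eodory integrands depending only on $u$, one has $\int_\Omega G(v_n)=1$ and $\int_\Omega j(v_n,|Dv_n|)=\int_\Omega j(u,|Du|)$; while the radial monotonicity \eqref{gg5} together with the Hardy-Littlewood inequality for polarizations yields $\int_\Omega F(|x|,v_n)\ge \int_\Omega F(|x|,u)$. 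Hence $\mathcal E(v_n)\le\mathcal E(u)=m$, and since $v_n \in \mathcal C$ equality holds: every $v_n$ is itself a minimizer of \eqref{minpb}.

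Next, I apply Proposition \ref{semic-lagrange} to each $v_n$ to produce a Lagrange multiplier $\lambda_n\in\R$ such that $v_n$ solves, in the Campa-Degiovanni generalized distributional sense,
\[
-\dvg\Big(\frac{j_t(v_n,|Dv_n|)}{|Dv_n|}Dv_n\Big)+j_s(v_n,|Dv_n|)=f(|x|,v_n)+\lambda_n g(v_n).
\]
Testing against fixed suitable functions, combined with the growth bounds \eqref{j2}--\eqref{j4} and \eqref{gg1}--\eqref{gg1-G} and the uniform estimate $\|Dv_n\|_{L^p}\le C$ that follows from the identity $\int_\Omega j(v_n,|Dv_n|)=\int_\Omega j(u,|Du|)$, yields that $(\lambda_n)$ is bounded and thus converges, up to a subsequence, to some $\lambda\in\R$. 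Setting $A_n(x,\xi):=j_t(v_n(x),|\xi|)\xi/|\xi|$, assumptions \eqref{j1}--\eqref{j4} guarantee that the $A_n$ are Leray-Lions type operators with uniform growth, monotonicity and coercivity. Writing the Euler-Lagrange equation as $-\dvg A_n(x,Dv_n)=F_n$, the Dal Maso-Murat theorem \cite{masmur} then delivers $Dv_n \to Du^*$ almost everywhere in $\Omega$ (along a further subsequence).

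With both $v_n\to u^*$ and $Dv_n\to Du^*$ a.e., Fatou's lemma together with the identity $\int_\Omega j(v_n,|Dv_n|)=\int_\Omega j(u,|Du|)$ gives $\int_\Omega j(u^*,|Du^*|)\le\int_\Omega j(u,|Du|)$; the reverse inequality is the generalized P\'olya-Szeg\"o inequality from \cite{HSq}, so equality holds throughout. Combined with the coercivity \eqref{j2} and the strict convexity \eqref{j1}, this self-improves to the classical identity $\|Du^*\|_{L^p(\Omega)}=\|Du\|_{L^p(\Omega)}$. Since $u\ge 0$, the Brothers-Ziemer identity case \cite{bzim}, as revisited by Ferone and Volpicelli in both the bounded and the unbounded setting \cite{FV,FV1}, combined with the standing hypothesis $\mathcal L^N(C^*)=0$, then forces $u=u^*$ up to a translation, which is the desired conclusion.

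The principal obstacle lies in the application of the Dal Maso-Murat theorem in Step 3: the term $j_s(v_n,|Dv_n|)$ appearing on the left-hand side of the Euler-Lagrange equation has, by \eqref{j3}, only natural $|Dv_n|^p$ growth, so it is a priori bounded only in $L^1(\Omega)$ and not in $W^{-1,p'}(\Omega)$. Incorporating it into the right-hand side in a way that is compatible with the hypotheses of \cite{masmur} is the real technical burden, and presumably requires a truncation scheme, exploitation of sign conditions coming from \eqref{j1}, or the generalized Leray-Lions framework already built in \cite{campad}. A more conceptual subtlety is the indispensability of the null-measure condition on $C^*$: without it the equality case of P\'olya-Szeg\"o admits non-symmetric extremals, and the final identification step breaks down.
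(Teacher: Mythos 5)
Your four-step architecture coincides with the paper's (polarized approximating minimizers, bounded Lagrange multipliers, Dal Maso--Murat convergence of gradients, Ferone--Volpicelli identity case), so the roadmap is sound. Two steps, however, are not actually carried out, and one of them relies on reasoning that would fail.

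In Step III you correctly flag that $j_s(v_n,|Dv_n|)$ is only $L^1$-bounded and does not fit $W^{-1,p'}(\Omega)$, and you guess a truncation might help; the paper's actual device is exactly such a truncation, but it is essential and has to be spelled out. One tests~\eqref{equEL} with $H(v_n/h)\varphi$, where $H$ is the cutoff in~\eqref{defh}. This simultaneously (a) turns the principal part into $b_n(x,\xi)=H(v_n(x)/h)\,j_t(v_n(x),|\xi|)\,\xi/|\xi|$, whose $s$-dependence is confined to the compact $[0,2h]$, giving uniform Leray--Lions bounds and pointwise convergence to $b(x,\xi)$; (b) relegates $j_s(v_n,|Dv_n|)$ together with the extra $H'$-term to a quantity $\mu_n$ which is merely bounded in $L^1$, hence weak-* convergent as Radon measures -- a right-hand side the Dal Maso--Murat theorem accepts; and (c) produces a.e.\ gradient convergence only on $\{u^*\le h\}$, from which one recovers the global statement by letting $h\to\infty$. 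Without this scheme the hypotheses of \cite{masmur} are simply not verified, so Step III as written is incomplete rather than a cosmetic detail.

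In Step IV the assertion that the energy identity $\int_\Omega j(u^*,|Du^*|)=\int_\Omega j(u,|Du|)$ ``self-improves, via coercivity and strict convexity, to $\|Du^*\|_{L^p}=\|Du\|_{L^p}$'' is the step that actually breaks. The paper explicitly records as an open problem that~\eqref{idgeneral} alone does not deliver the conclusion; moreover strict convexity of $t\mapsto j(s,t)$ plays no role at this stage. What is needed -- and what exploits the a.e.\ convergence $Dv_n\to Du^*$ you obtained in Step III -- is Fatou applied to the \emph{nonnegative} integrand $j(v_n,|Dv_n|)-\alpha_0|Dv_n|^p$, which converges a.e.\ to $j(u^*,|Du^*|)-\alpha_0|Du^*|^p$. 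Since $\int_\Omega j(v_n,|Dv_n|)$ is a constant sequence equal to $\int_\Omega j(u^*,|Du^*|)$, Fatou gives $\limsup_n\int_\Omega|Dv_n|^p\le\int_\Omega|Du^*|^p$. Combined with $\int_\Omega|Dv_n|^p=\int_\Omega|Du|^p$ (polarization invariance, a.e.\ $n$) and weak lower semicontinuity, this forces $\|Du\|_{L^p(\Omega)}=\|Du^*\|_{L^p(\Omega)}$, and only then is Proposition~\ref{identityc} applicable.
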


If the problem is not set in a ball or on the whole space, in general minima could fail to be radially symmetric,
even though the domain is invariant under rotations.
For instance, {\sc Esteban}~\cite{est} showed that, if $2<m<2^*$ 
and $B$ is a closed ball in $\R^N$, then the problem
$$
\min_{u\in H^1(\R^N\setminus{B})}\Big\{\int_{\R^N\setminus{B}}(|D u|^2+|u|^2):
\int_{\R^N\setminus{B}}|u|^{m}=1\Big\}
$$ 
admits a solution but {\em no} solution is radially symmetric. See also the discussion 
by {\sc Kawohl} in \cite[Example 6 and related references]{kawohl-int} 
for similar situations of non-symmetric solutions when the problem is defined on an annulus.

Also, as pointed out by {\sc Brothers} and {\sc Ziemer}~\cite[see Section 4]{bzim} 
with a counterexample, the condition ${\mathcal L}^N(C^*)=0$
is necessary in order to ensure that $\|Dv\|_{L^p(\Omega)}=\|Dv^*\|_{L^p(\Omega)}$ implies
that $v$ is a translation of $v^*$.

In the particular case where $j(s,t)=|t|^p$, the conclusion of Theorem~\ref{mainthALL} easily follows directly
from identity cases for the $p$-Laplacian operator. In fact, if $u\in {\mathcal C}$ is a nonnegative solution to the minimization problem
and $u^*$ is the Schwarz symmetrization of $u$, then of course $u^*$ belongs 
to the constraint ${\mathcal C}$ too (Cavalieri's principle). Moreover, in light of the classical P\'olya-Szeg\"o inequality
and~\eqref{Fsymmm} of Proposition~\ref{concretelem-star}, we have
\begin{equation}
	\label{polya-hardy}
\int_\Omega |D u^*|^p\leq \int_\Omega |D u|^p,\qquad  \int_\Omega F(|x|,u)\leq \int_\Omega F(|x|,u^*).
\end{equation}
Hence,
$$
m\leq {\mathcal E}(u^*)=\int_\Omega |D u^*|^p-\int_\Omega F(|x|,u^*)\leq
\int_\Omega |D u|^p-\int_\Omega F(|x|,u)=m.
$$
In turn, by~\eqref{polya-hardy}, we have both $\|D u^*\|_{L^p(\Omega)}=\|D u\|_{L^p(\Omega)}$
and $\int_\Omega F(|x|,u)=\int_\Omega F(|x|,u^*)$. Then, by~Proposition~\ref{identityc}, if ${\mathcal L}^N(C^*)=0$, there is 
a translate of $u^*$ which is equal to $u$. 
In the full quasi-linear case, the P\'olya-Szeg\"o inequality (cf.~Proposition~\ref{concretelem-star})
\begin{equation*}
\int_\Omega j(u^*,|D u^*|)\leq \int_\Omega j(u,|D u|)
\end{equation*}
holds as well when $j(u,|D u|)\in L^1(\Omega)$, and the above argument would lead to the identity
\begin{equation}
	\label{idgeneral}
\int_\Omega j(u^*,|D u^*|)=\int_\Omega j(u,|D u|).
\end{equation}
It is not clear (we set it as an open problem) if~\eqref{idgeneral} plus ${\mathcal L}^N(C^*)=0$, 
could yield {\em directly} the conclusion that there is 
a translate of $u^*$ which is almost everywhere equal to $u$. According to~\cite[Corollary 3.8]{HSq}, this would
hold true knowing in advance that $(u_n)\subset W^{1,p}_0(\Omega)$, $u_n\rightharpoonup v$
weakly and $\int_\Omega j(u_n,|D u_n|)$ convergent to $\int_\Omega j(v,|D v|)$ imply $\|Du_n\|_{L^p(\Omega)}\to \|Dv\|_{L^p(\Omega)}$,
as $n\to\infty$. This is known to be the case for 
strictly convex and coercive integrands $j$ which
are merely dependent on the gradient, say $j(s,t)=j_0(t)$, see~\cite{visi}.
In this paper we shall solve the problem indirectly, for minima, by reducing to identity cases for 
the $p$-Laplacian operator.
Of course one could derive the radial symmetry information focusing on identity cases of the 
nonlinear term, namely from $\int_\Omega F(|x|,u)=\int_\Omega F(|x|,u^*)$. For results
in this direction, under strict monotonicity assumptions of $f$ such as 
$$
\text{$f(|x|,s)>f(|y|,s)$, \, for all $s\in\R^+$ and $x,y\in \Omega$ with $|x|<|y|$},
$$ 
we refer the reader to~\cite[Section 6]{hichem-rse} (see also~\cite{brock0}). 

On the basis of the above discussion, the aim of the paper is to
focus the attention of the quasi-linear term in the functional ${\mathcal E}$ 
(we believe this is somehow more natural since the
strict convexity of $j(s,\cdot)$ is a very common requirement) and show that, for minima, 
identity~\eqref{idgeneral} implies, as desired, that $u$ corresponds to a translate of $u^*$.

\begin{remark}\rm
In light of conditions~\eqref{gg1} and~\eqref{gg1-G}, we also have	
\begin{align}
\label{gg2}
|F(|x|,s)| &\leq a(|x|)|s|+C|s|^{p^*},\qquad\text{for every $s\in\R$ and $x\in \Omega$,} \\
\label{gg2-G}
|G(s)| &\leq C|s|^p+C|s|^{p^*},\qquad\text{for every $s\in\R$}.
\end{align}
As a possible variant of the growth condition~\eqref{gg1} one could assume that $f:\R^+\times\R\to\R$ 
with $f(|x|,s)\geq 0$ for every $s\in\R^+$ and $x\in \Omega$ and
\begin{equation}
\label{gg1-bis}
|f(|x|,s)| \leq C|s|^{p-1}+C|s|^{p^*-1},\qquad\text{for every $s$ in $\R$ and $x\in \Omega$,}
\end{equation}
yielding, in turn, 
\begin{equation}
\label{gg2-bis}
|F(|x|,s)| \leq C|s|^{p}+C|s|^{p^*},\qquad\text{for every $s$ in $\R$ and $x\in \Omega$.}
\end{equation}
In this case the symmetrization inequality~\eqref{polya-hardy} for $F$
holds as well (use~\cite[Corollary 5.2]{HSt} in place of~\cite[Corollary 5.5]{HSt} in the case $\Omega=\R^N$
and~\cite[Theorem 6.3]{HSt} in place of~\cite[Theorem 6.4]{HSt} in the case $\Omega=B_R(0)$).
It is often the case the $F$ must satisfy 
growth conditions which are more restrictive than~\eqref{gg2} or~\eqref{gg2-bis} in order 
to have $m>-\infty$. For instance, assume 
that $G(s)=|s|^p$, $j(s,t)=|t|^p$ and $F(|x|,s)=|s|^\sigma$. 
Then, as a simple scaling argument shows, to guarantee that the minimization problem is well defined 
it is necessary to assume that $p<\sigma<p+p^2/N$. 
For $p=2$, the value $2+4/N$ is precisely the well-known threshold for orbital stability 
of ground states solutions for the nonlinear Schr\"odinger equation. 
\end{remark}

\begin{remark}\rm
If $\Omega=\R^N$ and $G(s_0)>0$ at some point $s_0>0$, one can write down a 
function $\psi_{s_0}\in W^{1,p}\cap L^\infty_c(\R^N)$ such that $\int_{\R^N} G(\psi_{s_0})=1$ 
(see \cite[p.325]{berlio}). Moreover, by~\eqref{j2}, 
	$$
	\int_{\R^N} j(\psi_{s_0},|D\psi_{s_0}|)\leq \alpha(\|\psi_{s_0}\|_{L^\infty})\int_{\R^N} |D\psi_{s_0}|^p<+\infty.
	$$
	Hence $\psi_{s_0}\in {\mathcal C}$ as well as ${\mathcal E}(\psi_{s_0})<+\infty$ (which guarantees $m<+\infty$). 
	If $\Omega=B_R(0)$
	and, for instance, $\pi G(s_0) R^2>1$, similarly, one can find $\phi_{s_0}\in C^\infty_c(B_R(0))$ 
	belonging to ${\mathcal C}$.
\end{remark}

It would be interesting to extend Theorem~\ref{mainthALL}, in a suitable sense, to allow the case of possibly sign-changing solutions, 
systems and multiple constraints. The main ingredients of the argument are the facts that the functional decreases
under both polarization and symmetrization, while the constraint remains invariant to them. This can be 
achieved for some classes of vectorial problems
putting cooperativity conditions on the nonlinear term $F$ and considering $G$ and $j$
involving a combinations of functions depending only on one single variable, in order to exploit 
Cavalieri's principle and Polya-Szeg\"o type inequalities. Notice also that the almost everywhere
convergence of the gradients due to Dal Maso and Murat~\cite{masmur} is valid for systems of PDEs as well.
We leave this issues to further future investigations.

\section{Preliminary facts}

In the section we include some preparatory results.

\subsection{Polarization and Schwarz symmetrization}

For the notions of this section, we refer, for instance, to~\cite{brock}.
A subset $H$ of $\R^N$ is called a polarizer if it is a closed affine half-space
of $\R^N$. Given $x\in\R^N$ and a polarizer $H$, the reflection of $x$ with respect to the boundary of $H$ is
denoted by $x_H$. The polarization of a function $u:\R^N\to\R^+$ by a polarizer $H$
is the function $u^H:\R^N\to\R^+$ defined by
\begin{equation}
 \label{polarizationdef}
u^H(x):=
\begin{cases}
 \max\{u(x),u(x_H)\}, & \text{if $x\in H$} \\
 \min\{u(x),u(x_H)\}, & \text{if $x\in \R^N\setminus H$.} \\
\end{cases}
\end{equation}
The polarization $\Omega^H\subset\R^N$ of a set $\Omega\subset\R^N$ is 
defined as the unique set which satisfies $\chi_{\Omega^H}=(\chi_\Omega)^H$,
where $\chi$ denotes the characteristic function. 
The polarization $u^H$ of a nonnegative function $u$ defined on $\Omega\subset \R^N$
is the restriction to $\Omega^H$ of the polarization of the extension $\tilde u:\R^N\to\R^+$ of 
$u$ by zero outside $\Omega$. 
The Schwarz symmetrization of a set $\Omega\subset \R^N$ is the unique open ball 
centered at the origin $\Omega^*$ such that ${\mathcal L}^N(\Omega^*)={\mathcal L}^N(\Omega)$, being ${\mathcal L}^N$ the
$N$-dimensional outer Lebesgue measure. If the measure of $\Omega$ is zero we set $\Omega^*=\emptyset$,
while if the measure of $\Omega$ is not finite we put $\Omega^*=\R^N$. 
A measurable function $u$ is admissible for the Schwarz symmetrization if it is nonnegative and, for every $\eps>0$,
the Lebesgue measure of $\{u>\eps\}$ is finite. The Schwarz symmetrization
of an admissible function $u:\Omega\to\R^+$ is the unique function $u^*:\Omega^*\to\R^+$ such that,
for all $t\in\R$, it holds $\{u^*>t\}=\{u>t\}^*$. Considering the extension 
$\tilde u:\R^N\to\R^+$ of $u$ by zero outside $\Omega$, 
$u^*=(\tilde u)^*|_{\Omega^*}$ and $(\tilde u)^*|_{\R^N\setminus \Omega^*}=0$.

\medskip
\noindent
We shall denote by ${\mathcal H}_*$ the set of all half-spaces
corresponding to $(n-1)$-dimensional Euclidean hyperplanes, containing the origin in the interior.
As known, for a domain $\Omega$, it holds $\Omega^*=\Omega$ if and only
if $\Omega^H=\Omega$, for all $H\in {\mathcal H}_*$ (cf.~\cite[Lemma 6.3]{brock}).
We now recall a very useful convergence result (cf.~e.g.~\cite{jvsh2}).

\begin{proposition}
	\label{convsymm}
	Assume that $\Omega$ is either $\R^N$ or a ball $B_R(0)\subset\R^N$.
There exists a sequence of polarizers $(H_m)\subset {\mathcal H}_*$
such that, for any $1\leq p<\infty$ and all $u\in L^p(\Omega)$,
the sequence $u_m=u^{H_1\cdots H_m}$ converges to $u^*$ strongly in $L^p(\Omega)$, 
namely $\|u_m-u^*\|_{L^p(\Omega)}\to 0$ as $m\to\infty$.
\end{proposition}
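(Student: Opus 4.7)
The plan is to construct the polarizer sequence $(H_m)$ by a diagonal enumeration of a countable dense subfamily of $\mathcal{H}_*$, and to extract convergence from two fundamental properties of polarization by $H\in\mathcal{H}_*$: $L^p$-non-expansiveness and invariance of the Schwarz symmetrization. First I would parametrize $\mathcal{H}_*$ through the outer unit normal to $\partial H$ together with the (positive) distance of $\partial H$ from the origin, pick a countable dense family $\{K_j\}_{j\in\N}\subset\mathcal{H}_*$, and enumerate $(H_m)$ so that each $K_j$ appears as $H_m$ for infinitely many indices (via the diagonal pattern $K_1;K_1,K_2;K_1,K_2,K_3;\dots$).

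For $H\in\mathcal{H}_*$, since $0$ lies in the interior of $H$, each $x\in H$ satisfies $|x|\le|x_H|$; hence the radially decreasing function $u^*$ fulfils $(u^*)^H=u^*$, and for $c\ge d\ge 0$ the two-point inequality
\[
|\max(a,b)-c|^p+|\min(a,b)-d|^p\le|a-c|^p+|b-d|^p
\]
(strict unless $a\ge b$ or $c=d$) holds on every orbit $\{x,x_H\}$ with $c=u^*(x)$, $d=u^*(x_H)$. Integrating yields the $L^p$-non-expansiveness $\|u^H-v^H\|_{L^p}\le\|u-v\|_{L^p}$ and, with $v=u^*$, the monotone decrease
\[
\|u_m-u^*\|_{L^p(\Omega)}=\|u_{m-1}^{H_m}-(u^*)^{H_m}\|_{L^p(\Omega)}\le\|u_{m-1}-u^*\|_{L^p(\Omega)}.
\]
Hence $\|u_m-u^*\|_{L^p}\to\ell(u)\ge 0$, and the task is to force $\ell(u)=0$. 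By the 1-Lipschitz property it is enough to do so on a dense subclass of $L^p(\Omega)$, which I choose as the nonnegative bounded functions with support in some centered ball $B_R$; each such ball is invariant under every $H\in\mathcal{H}_*$ (for $x\in B_R\cap H^c$ the reflection satisfies $|x_H|\le|x|\le R$), so the entire sequence $(u_m)$ remains uniformly bounded and supported in $B_R$.

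For such $u$, I would extract a subsequence $u_{m_k}\rightharpoonup w$ weakly in $L^p$ using the uniform $L^\infty$ bound and compact support. The diagonal enumeration lets me refine further so that, for a fixed $j$, $H_{m_k+1}=K_j$ for all $k$; then both $\|u_{m_k}-u^*\|_{L^p}$ and $\|u_{m_k}^{K_j}-u^*\|_{L^p}=\|u_{m_k+1}-u^*\|_{L^p}$ share the common limit $\ell(u)$, so the orbit-wise two-point inequality above is asymptotically saturated. A quantitative strict version of this inequality (controlling the defect $|a-c|^p+|b-d|^p-|\max(a,b)-c|^p-|\min(a,b)-d|^p$ from below in terms of $(b-a)_+$ and $c-d$) then forces $\|u_{m_k}-u_{m_k}^{K_j}\|_{L^p}\to 0$, whence $w^{K_j}=w$. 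Density of $\{K_j\}$ in $\mathcal{H}_*$ combined with the $L^p$-continuity of $H\mapsto w^H$ yields $w^H=w$ for every $H\in\mathcal{H}_*$, so $w=w^*$ by the classical characterization recalled above (cf.~\cite[Lemma 6.3]{brock}); persistence of the distribution function in the weak limit then gives $w^*=u^*$, and so $w=u^*$. Finally, the constant norms $\|u_m\|_{L^p}=\|u^*\|_{L^p}$ together with weak convergence in the uniformly convex space $L^p$ (for $1<p<\infty$) upgrade to strong convergence; the case $p=1$ is handled by truncation combined with the $L^\infty$ bound and dominated convergence.

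The main obstacle is this identification step: polarization is not weakly continuous on $L^p$, so passing from a weak cluster point $w$ to the symmetric limit $u^*$ demands a quantitative strengthening of the two-point orbit inequality that detects how far from the equality case a generic $u$ sits in $L^p$-norm. This two-point bookkeeping is the technical heart of the argument and is carried out in detail in~\cite{jvsh2}; I would import it rather than redevelop the rearrangement machinery from scratch.
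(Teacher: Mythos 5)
The paper does not actually prove this proposition: it is recalled with a pointer to Van Schaftingen \cite{jvsh2}, so the comparison has to be with the argument there. That argument is organized around density in the space of \emph{functions}, not of polarizers: one first shows that for each fixed $u$ and $\varepsilon>0$ a \emph{finite} string of polarizations brings $u$ within $\varepsilon$ of $u^*$ in $L^p$, then interleaves such strings for a countable dense family $\{v_j\}\subset L^p_+$, and finally extends to arbitrary $u$ using exactly the two facts you correctly isolate (the non-expansiveness $\|u^H-v^H\|_{L^p}\le\|u-v\|_{L^p}$ and the resulting monotonicity of $m\mapsto\|u_m-u^*\|_{L^p}$, which guarantees that once an iterate of $v_j$ is $\varepsilon$-close to $v_j^*$ it stays so under all subsequent polarizations). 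Your scheme --- countable dense family of polarizers, weak cluster point $w$, identification of $w$ with $u^*$ --- is genuinely different, and it is precisely the identification step that breaks.

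Concretely, the defect in your two-point inequality on the orbit $\{x,x_H\}$ is comparable to $(u(x_H)-u(x))_+\,(u^*(x)-u^*(x_H))$; your own parenthetical records that it vanishes whenever $c=d$, i.e.\ whenever $u^*(x)=u^*(x_H)$. Since $u^*$ is only nonincreasing in $|x|$, the set $\{x\in H:\,u^*(x)=u^*(x_H)\}$ can have positive --- even essentially full --- measure: take $u$ a characteristic function, or $u$ supported far from the origin, so that $u^*$ is flat on large annuli and vanishes where $u$ lives. On that set the asymptotic saturation of the inequality gives no control on $(u_{m_k}(x_{K_j})-u_{m_k}(x))_+$, so the asserted conclusion $\|u_{m_k}-u_{m_k}^{K_j}\|_{L^p}\to0$, and with it $w^{K_j}=w$, does not follow; the "quantitative two-point bookkeeping" you propose to import from \cite{jvsh2} is not of the form you need and cannot repair this. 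A second, independent gap is the appeal to "persistence of the distribution function in the weak limit": each $u_m$ is equimeasurable with $u$, but a weak $L^p$ limit of equimeasurable functions need not share their distribution function (oscillating characteristic functions converge weakly to a constant), so even granting $w=w^*$ you cannot conclude $w=u^*$ this way --- and without that, the final upgrade from weak to strong convergence via constancy of $\|u_m\|_{L^p}$ has nothing to bite on. These are exactly the difficulties that the literature circumvents either by monitoring a Lyapunov functional $\int u_m\phi$ against a \emph{fixed, strictly} radially decreasing weight $\phi$, whose two-point defect $(u(x_H)-u(x))_+(\phi(x)-\phi(x_H))$ has an everywhere positive second factor (\cite{brock}), or by the function-density scheme of \cite{jvsh2}, which avoids weak limits altogether.
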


\noindent
We collect some properties of polarizations.
See~\cite[Lemma 2.5]{HSq} and~\cite[Proposition 2.3]{jvsh1} respectively.
Concerning~\cite[Lemma 2.5]{HSq}, it is stated therein on $\R^N$, but it holds on $B_R(0)$ as well after 
extending the functions by zero outside it and recalling that $j(\cdot,0)=0$.

\begin{proposition}
	\label{concretelem}
	Let $\Omega$ be either $\R^N$ or a ball $B_R(0)$,
	$u\in W^{1,p}_0(\Omega,\R^+)$ and $H\in {\mathcal H}_*$. Then $u^H\in W^{1,p}_0(\Omega,\R^+)$ and,
	if $j(u,|Du|)\in\ L^1(\Omega)$, then $j(u^H,|Du^H|)\in\ L^1(\Omega)$ and
 \begin{equation}
	\label{fundineq}
		\int_{\Omega}j(u,|Du|)=\int_{\Omega}j(u^H,|D u^H|).
	\end{equation}
	In particular,
	\begin{equation}
		\label{fundineqpart}
			\int_{\Omega}|D u|^p =\int_{\Omega}|D u^H|^p.
		\end{equation}
		Furthermore, $F(|x|,u),F(|x|,u^H),G(u),G(u^H)\in L^1(\Omega)$ and 
		$$
		\int_{\Omega} F(|x|,u^H)\geq \int_\Omega F(|x|,u),\quad
		\int_{\Omega} G(u^H)=\int_\Omega G(u),
		$$
provided that conditions~\eqref{gg1},~\eqref{gg5} and~\eqref{gg1-G} hold.
\end{proposition}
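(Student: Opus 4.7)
The plan is to reduce every assertion to a two-point pairing between each $x \in H$ and its reflection $x_H \in H^c$. First I would decompose $\Omega$ (after extending $u$ by zero to $\R^N$) as $\Omega = A \sqcup B \sqcup \sigma_H(A) \sqcup \sigma_H(B)$ modulo a null set, where $\sigma_H$ denotes reflection across $\partial H$ and $A := \{x \in H : u(x) \geq u(x_H)\}$, $B := H \setminus A$. By definition \eqref{polarizationdef}, $u^H$ equals $u$ on $A \cup \sigma_H(A)$ and equals $u \circ \sigma_H$ on $B \cup \sigma_H(B)$.

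For the regularity $u^H \in W^{1,p}_0(\Omega,\R^+)$ I would exploit that
\begin{equation*}
u^H = \max\{u,u\circ\sigma_H\}\,\chi_H + \min\{u,u\circ\sigma_H\}\,\chi_{\Omega\setminus H},
\end{equation*}
so that $u^H$ coincides on each of $H$ and $\Omega \setminus H$ with a $W^{1,p}$ function (since $\max$ and $\min$ of two $W^{1,p}$ functions lie in $W^{1,p}$), and the two pieces share the common trace $u = u \circ \sigma_H$ on $\partial H$, hence glue together in $W^{1,p}_0(\Omega)$. This also pins down the gradient as $|Du^H(x)| = |Du(x)|$ on $A \cup \sigma_H(A)$ and $|Du^H(x)| = |Du(x_H)|$ on $B \cup \sigma_H(B)$, since $\sigma_H$ is orthogonal with unit Jacobian. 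A change of variables $y = x_H$ on the two pieces where $u^H = u \circ \sigma_H$ then yields
\begin{equation*}
\int_\Omega j(u^H,|Du^H|) = \int_{A\cup\sigma_H(A)} j(u,|Du|) + \int_{B\cup\sigma_H(B)} j(u,|Du|) = \int_\Omega j(u,|Du|),
\end{equation*}
which is \eqref{fundineq}, and \eqref{fundineqpart} is the special case $j(s,t) = |t|^p$. The $L^1$-integrabilities of $F(|x|,u)$, $F(|x|,u^H)$, $G(u)$, $G(u^H)$ follow from \eqref{gg1}, \eqref{gg1-G}, the Sobolev embedding $W^{1,p}_0(\Omega) \hookrightarrow L^p(\Omega) \cap L^{p^*}(\Omega)$, and H\"older's inequality against $a \in L^{Np/(N(p-1)+p)}$.

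For $G$, the same pairing gives at once
\begin{equation*}
\int_\Omega G(u^H) = \int_H \bigl[G(u^H(x)) + G(u^H(x_H))\bigr]\,dx = \int_H \bigl[G(u(x)) + G(u(x_H))\bigr]\,dx = \int_\Omega G(u),
\end{equation*}
since $\{u^H(x),u^H(x_H)\} = \{u(x),u(x_H)\}$ as unordered pairs. The key geometric fact behind the inequality for $F$ is that $0 \in \mathrm{int}(H)$ forces $|x| \leq |x_H|$ for every $x \in H$ (a quick computation with $H = \{y : y\cdot e \geq c\}$, $c \leq 0$). Pairing and changing variables in the same fashion then produces
\begin{equation*}
\int_\Omega F(|x|,u^H) - \int_\Omega F(|x|,u) = \int_B \bigl[\psi_x(u(x_H)) - \psi_x(u(x))\bigr]\,dx \geq 0,
\end{equation*}
because $\psi_x(s) := F(|x|,s) - F(|x_H|,s)$ is nondecreasing on $\R^+$ thanks to $\psi_x'(s) = f(|x|,s) - f(|x_H|,s) \geq 0$ by \eqref{gg5}, and $u(x_H) > u(x) \geq 0$ on $B$.

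The most delicate part is the Sobolev regularity of $u^H$ together with the pointwise gradient identification on the four pieces, which asks for a careful handling of the trace across $\partial H$. A cleaner alternative would be to first establish all the conclusions for Lipschitz, compactly supported $u$, for which $u^H$ is itself Lipschitz and the entire decomposition is transparent, and then pass to a general $u \in W^{1,p}_0(\Omega,\R^+)$ by density, using the $L^p$-continuity of the polarization map together with the just-proved norm preservation \eqref{fundineqpart} to upgrade $L^p$-convergence of the polarized approximants to $W^{1,p}$-convergence. This is essentially the route followed in Lemma~2.5 of \cite{HSq} and Proposition~2.3 of \cite{jvsh1}.
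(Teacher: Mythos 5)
Your argument is correct and complete. Note that the paper itself offers no proof of Proposition~\ref{concretelem}: it simply cites \cite[Lemma~2.5]{HSq} for \eqref{fundineq} and \cite[Proposition~2.3]{jvsh1} for the $F$ and $G$ assertions, adding only that the $\R^N$ statement transfers to $B_R(0)$ after zero-extension because $j(\cdot,0)=0$. Your two-point pairing argument --- decomposing $H$ into $A=\{u\geq u\circ\sigma_H\}$ and $B=H\setminus A$, identifying $u^H$ with $u$ on $A\cup\sigma_H(A)$ and with $u\circ\sigma_H$ on $B\cup\sigma_H(B)$, and changing variables $y=x_H$ on the swapped pieces --- is precisely the standard mechanism underlying those references, and every step checks out: the $W^{1,p}$ gluing across $\partial H$ via the common trace, the gradient identification $|Du^H(x)|=|Du(\sigma_H(x))|$ on the swapped set, the computation $|x_H|^2=|x|^2-4c\,(x\cdot e-c)\geq|x|^2$ for $c<0$ showing $|x|\leq|x_H|$ on $H$, the monotonicity of $\psi_x(s)=F(|x|,s)-F(|x_H|,s)$ via \eqref{gg5} combined with $u(x_H)>u(x)\geq 0$ on $B$, and the unordered-pair invariance $\{u^H(x),u^H(x_H)\}=\{u(x),u(x_H)\}$ giving the $G$ identity. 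The one thing left implicit is the fact $j(\cdot,0)=0$ (immediate from \eqref{j2} at $\xi=0$): it is what lets you perform the change of variables over all of $\R^N$ and still conclude about integrals over $\Omega$ in the ball case, since $\sigma_H$ does not map $B_R(0)$ to itself even though $\Omega^H=\Omega$; this is exactly the remark the paper flags. Your closing suggestion --- prove everything for Lipschitz compactly supported $u$ and pass to the limit by density using \eqref{fundineqpart} to upgrade $L^p$-convergence to $W^{1,p}$-convergence --- is a perfectly sound alternative and is close to how such results are often organized in the rearrangement literature.
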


\noindent
The next proposition follows by~\cite[Corollary 5.5]{HSt} for the case $\Omega=\R^N$ and~\cite[Theorem 6.4]{HSt} for 
the case $\Omega=B_R(0)$ concerning the symmetrization inequality for $F$. Concerning the Cavalieri's principle for $G$,
it follows from~\cite[Theorem 4.4]{HSt} in the case $\Omega=\R^N$ and from~\cite[Theorem 6.2]{HSt} in the case $\Omega=B_R(0)$.
Finally, concerning the generalized P\'olya-Szeg\"o inequality, it follows from~\cite[Corollary 3.3]{HSq}.

\begin{proposition}
	\label{concretelem-star}
	Let $\Omega$ be either $\R^N$ or a ball $B_R(0)$ and let
	$u\in W^{1,p}_0(\Omega,\R^+)$. Then $u^*\in W^{1,p}_0(\Omega,\R^+)$ and,
	if $j(u,|Du|)\in\ L^1(\Omega)$, then $j(u^*,|Du^*|)\in\ L^1(\Omega)$ and
 \begin{equation}
	\label{fundineq-star}
		\int_{\Omega}j(u^*,|Du^*|)\leq \int_{\Omega}j(u,|D u|).
	\end{equation}
	In particular,
	$$
	\int_{\Omega} |Du^*|^p\leq \int_{\Omega} |Du|^p.
	$$
	Furthermore, $F(|x|,u),F(|x|,u^*),G(u),G(u^*)\in L^1(\Omega)$ and 
		\begin{equation}
			\label{Fsymmm}
		\int_{\Omega} F(|x|,u^*)\geq \int_\Omega F(|x|,u),\quad
		\int_{\Omega} G(u^*)=\int_\Omega G(u),
	\end{equation}
provided that conditions~\eqref{gg1},~\eqref{gg5} and~\eqref{gg1-G} hold.
\end{proposition}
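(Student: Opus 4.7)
The plan is to derive every assertion of the proposition by approximating $u^*$ with an iterated polarization sequence $u_m := u^{H_1\cdots H_m}$ and then transferring the exact polarization identities of Proposition~\ref{concretelem} to the Schwarz symmetrization through weak convergence and lower semicontinuity. First, Proposition~\ref{convsymm} furnishes polarizers $H_m \in \mathcal{H}_*$ with $u_m \to u^*$ strongly in $L^p(\Omega)$. Iterating Proposition~\ref{concretelem}, each $u_m$ lies in $W^{1,p}_0(\Omega,\R^+)$ with $j(u_m,|Du_m|) \in L^1(\Omega)$ and
$$\int_\Omega j(u_m,|Du_m|) = \int_\Omega j(u,|Du|), \qquad \int_\Omega |Du_m|^p = \int_\Omega |Du|^p.$$
The coercivity estimate $\alpha_0|\xi|^p \leq j(s,|\xi|)$ from \eqref{j2} then gives a uniform $W^{1,p}_0$ bound on $(u_m)$, so (up to subsequence) $u_m$ converges weakly in $W^{1,p}_0(\Omega)$ to some $v$; the strong $L^p$ convergence forces $v=u^*$, which establishes $u^* \in W^{1,p}_0(\Omega,\R^+)$.

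To obtain \eqref{fundineq-star}, observe that under \eqref{j1}--\eqref{j2} the integrand $j(s,|\xi|)$ is nonnegative, jointly continuous, and convex in $\xi$, so the classical Ioffe--De Giorgi weak lower semicontinuity theorem yields
$$\int_\Omega j(u^*,|Du^*|) \leq \liminf_{m\to\infty}\int_\Omega j(u_m,|Du_m|) = \int_\Omega j(u,|Du|),$$
which in particular delivers $j(u^*,|Du^*|) \in L^1(\Omega)$. The specialization $j(s,t)=t^p$ gives $\int_\Omega |Du^*|^p \leq \int_\Omega |Du|^p$.

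The integrability of $F(|x|,u^*)$ and $G(u^*)$ follows from the growth bounds \eqref{gg2} and \eqref{gg2-G} combined with the embedding $W^{1,p}_0(\Omega) \hookrightarrow L^{p^*}(\Omega)$, which is now available for $u^*$. The identity $\int_\Omega G(u^*) = \int_\Omega G(u)$ is Cavalieri's principle applied to the equimeasurable pair $(u,u^*)$. For the inequality involving $F$, Proposition~\ref{concretelem} already gives $\int_\Omega F(|x|,u_m) \geq \int_\Omega F(|x|,u)$ for every $m$, and the limit is taken via Vitali's theorem: pointwise a.e.\ convergence of a subsequence $u_m \to u^*$ (extracted from the $L^p$ convergence), combined with equi-integrability of $(F(|x|,u_m))$ — which comes from the uniform $L^{p^*}$ bound on $(u_m)$ together with $a \in L^{Np/(N(p-1)+p)}(\Omega)$ — yields $\int_\Omega F(|x|,u_m) \to \int_\Omega F(|x|,u^*)$ and hence $\int_\Omega F(|x|,u^*) \geq \int_\Omega F(|x|,u)$.

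The main delicate step is the weak lower semicontinuity of $w \mapsto \int_\Omega j(w,|Dw|)$ along the polarization sequence; it crucially uses the convexity of $j$ in the gradient variable from \eqref{j1} together with the continuity of $j$ in $s$ inherent in the hypotheses. A secondary subtlety on the unbounded domain $\Omega = \R^N$ is tail control when passing to the limit in the $F$-integrals: one splits via \eqref{gg1} into a part dominated by $a \in L^{Np/(N(p-1)+p)}$ (which supplies $L^1$ tail decay at infinity in $|x|$) and an $L^{p^*}$-critical part, noting that the $L^{p^*}$ bound on $(u_m)$ is only uniform and not compact in the critical exponent, so Vitali's equi-integrability, rather than dominated convergence, is the proper tool.
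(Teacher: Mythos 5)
The paper does not actually write out a proof of this proposition: it obtains \eqref{fundineq-star} by citing \cite[Corollary 3.3]{HSq} and the $F$- and $G$-statements by citing the symmetrization results of \cite{HSt}. Your polarization-plus-lower-semicontinuity scheme is essentially the mechanism behind the cited generalized P\'olya--Szeg\"o inequality, so for the $j$-part you are reconstructing the reference rather than diverging from the paper; for the $F$-part you take a genuinely different route (passing to the limit along polarizations instead of invoking an extended Hardy--Littlewood inequality directly from \eqref{gg5}). The core steps are sound: \eqref{fundineqpart} of Proposition~\ref{concretelem} plus Proposition~\ref{convsymm} give $u_m\rightharpoonup u^*$ in $W^{1,p}_0(\Omega)$, and Ioffe's theorem applies because $(s,\xi)\mapsto j(s,|\xi|)$ is nonnegative, continuous and convex in $\xi$ by \eqref{j1}--\eqref{j2}; the Cavalieri step for $G$ is also fine.

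There is, however, a genuine gap in your passage to the limit in $\int_\Omega F(|x|,u_m)\geq\int_\Omega F(|x|,u)$. You justify the equi-integrability of $(F(|x|,u_m))$ by ``the uniform $L^{p^*}$ bound on $(u_m)$''. That implication is false for the critical term $C|u_m|^{p^*}$ in \eqref{gg2}: a uniform bound $\sup_m\|u_m\|_{L^{p^*}}<\infty$ only gives boundedness of $(|u_m|^{p^*})$ in $L^1(\Omega)$, and such a family may concentrate on sets of small measure (and, when $\Omega=\R^N$, leak to infinity), which is exactly the failure of the Vitali hypotheses --- you even acknowledge the lack of compactness at the critical exponent but then assert the conclusion anyway. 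The repair is immediate from your own construction and should be stated: each $u_m$ is an iterated polarization of $u$, hence equimeasurable with $u$ and with $u^*$, so $\|u_m\|_{L^{p^*}(\Omega)}=\|u^*\|_{L^{p^*}(\Omega)}$ for every $m$; combined with the a.e.\ convergence of a subsequence, the Brezis--Lieb (or Riesz) lemma yields $u_m\to u^*$ \emph{strongly} in $L^{p^*}(\Omega)$, after which the convergence of $\int_\Omega F(|x|,u_m)$ to $\int_\Omega F(|x|,u^*)$ follows by the standard continuity of the Nemytskii functional under \eqref{gg1}--\eqref{gg2} (the $a(|x|)|u_m|$ term being handled, as you say, by H\"older against the fixed $a\in L^{Np/(N(p-1)+p)}(\Omega)$). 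Alternatively, bypass the limit altogether for this term and quote the extended Hardy--Littlewood inequality of \cite{HSt}, which is what the paper does.
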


\noindent
The next result comes from~\cite{FV} for $\Omega$ bounded
and~\cite{FV1} for $\Omega=\R^N$.

\begin{proposition}
	\label{identityc}
Assume that $\Omega$ is an open, bounded subset of $\R^N$ and let
$u\in W^{1,p}_0(\Omega)$ be a nonnegative function, $1<p<\infty$, such that
$$
{\mathcal L}^N(C^*)=0,\quad
C^*:=\{x\in\Omega^*:|D u^*(x)|=0\}\cap (u^*)^{-1}(0,{\rm esssup}\, u).
$$
Then, if
$$
\|D u^*\|_{L^p(\Omega^*)}=\|D u\|_{L^p(\Omega)},
$$
the domain $\Omega$ is equivalent to a ball and $u=u^*$ a.e.~in $\Omega$, up to a translation.
Moreover, the same conclusion holds for $\Omega=\R^N$.
\end{proposition}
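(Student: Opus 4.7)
The plan is to follow the Brothers--Ziemer scheme: use the co-area formula to rewrite $\int_\Omega |Du|^p$ as an integral over level sets, apply H\"older's inequality and the Euclidean isoperimetric inequality to recover the classical P\'olya--Szeg\"o comparison, and then read off the rigidity forced by the standing equality. Concretely, I would start from
$$
\int_\Omega|Du|^p\,dx=\int_0^M\!\!\int_{\{u=t\}}|Du|^{p-1}\,d{\mathcal H}^{N-1}\,dt,\qquad M={\rm esssup}\,u,
$$
apply H\"older on each fibre together with the identity $\int_{\{u=t\}}|Du|^{-1}\,d{\mathcal H}^{N-1}=-\mu'(t)$, where $\mu(t)={\mathcal L}^N(\{u>t\})$, to obtain the lower bound $P(\{u>t\})^p/(-\mu'(t))^{p-1}$ on the inner integral, and combine with the perimeter bound $P(\{u>t\})\geq P(\{u^*>t\})$. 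Both steps become equalities for $u^*$ (spherical level sets and $|Du^*|$ constant on each sphere), so the chain gives $\|Du^*\|_p\leq \|Du\|_p$; the standing equality then forces both estimates to saturate for a.e. $t\in(0,M)$ with $-\mu'(t)>0$.

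\textbf{Equality analysis.} Saturation of the isoperimetric inequality forces $\{u>t\}$ to coincide, up to ${\mathcal L}^N$-null sets, with a ball $B(x_t,r_t)$ satisfying $\omega_N r_t^N=\mu(t)$; saturation of H\"older forces $|Du|$ to be essentially constant on $\{u=t\}$. The assumption ${\mathcal L}^N(C^*)=0$ enters here in a crucial and transparent way: writing $u^*(x)=\phi(|x|)$ with $\phi$ nonincreasing, $C^*$ has positive measure precisely when $\phi$ has a constancy plateau on some sub-interval of $(0,M)$, equivalently when $\mu$ has a jump in $(0,M)$. Excluding this yields $-\mu'(t)>0$ for a.e. $t\in(0,M)$, so that the above structural conclusions apply across the entire level range.

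\textbf{Synchronizing centers (main obstacle) and the $\R^N$ case.} The hard part is to promote the $t$-dependent centers $x_t$ to a single common $x_0$. The strict nesting $B(x_{t'},r_{t'})\subsetneq B(x_t,r_t)$ for $t<t'$ imposes the geometric constraint $|x_{t'}-x_t|+r_{t'}\leq r_t$; I would combine this with the absolute continuity of $\mu$ and the level-set constancy of $|Du|$ to rule out any $t$-jump in the center, working along a countable dense set of levels and selecting centers coherently via a barycenter/limit procedure, with contradiction to H\"older saturation at nearby levels in case of a shift. Once $x_t\equiv x_0$, one gets $\{u>t\}=B(x_0,r(t))$ for a.e. $t$, hence $u=u^*(\cdot-x_0)$ a.e., and in particular $\Omega$ equals $x_0+\Omega^*$ up to a null set. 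For $\Omega=\R^N$ I would apply the bounded-domain case to the truncations $(u-\eps)_+$ on the finite-measure sets $\{u>\eps\}$, match the resulting translations via the nesting $\{u>\eps_1\}\supset\{u>\eps_2\}$ for $\eps_1<\eps_2$, and pass to the limit $\eps\to 0^+$.
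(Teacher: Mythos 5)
The paper does not prove this proposition at all: it is quoted verbatim from Brothers--Ziemer \cite{bzim} as refined by Ferone--Volpicelli \cite{FV,FV1}, so your attempt is measured against those references rather than against anything in the text. Your outline correctly reproduces the first half of that scheme (coarea formula, H\"older on the level sets, isoperimetric inequality, and the saturation analysis forcing each superlevel set $\{u>t\}$ to be a ball $B(x_t,r_t)$ with $|Du|$ constant on $\{u=t\}$). But the proof has a genuine gap exactly where you flag ``the main obstacle'': the synchronization of the centers $x_t$. Knowing that each $\{u>t\}$ is a ball and that $|Du|$ is $\mathcal H^{N-1}$-a.e.\ constant on each level set does \emph{not} readily force the balls to be concentric --- nested balls of decreasing radius can drift, and the constraint $|x_{t'}-x_t|+r_{t'}\le r_t$ permits a total drift of order $r_0$. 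The phrase ``rule out any $t$-jump in the center \dots\ via a barycenter/limit procedure, with contradiction to H\"older saturation at nearby levels in case of a shift'' is a statement of intent, not an argument; this is precisely the step that occupies the bulk of \cite{bzim} (via a delicate measure-theoretic argument on the map $t\mapsto x_t$) and that \cite{FV,FV1} re-prove by a genuinely different device. As written, the proposal does not establish the proposition.

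A second, more localized error: you assert that ${\mathcal L}^N(C^*)>0$ holds ``precisely when $\phi$ has a constancy plateau \dots\ equivalently when $\mu$ has a jump in $(0,M)$.'' This is false. Jumps of $\mu$ correspond to plateaus of $\phi$, but $\phi'$ can vanish on a set of positive measure (e.g.\ a fat Cantor set) with $\phi$ strictly decreasing and $\mu$ continuous; in that case $C^*$ has positive measure while $\mu$ has no jumps. The correct equivalence is between ${\mathcal L}^N(C^*)=0$ and the \emph{absolute continuity} of $\mu$, and the Brothers--Ziemer counterexample (Section 4 of \cite{bzim}, cited in the paper as showing the hypothesis is necessary) lives exactly in the gap between continuity and absolute continuity. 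If your proof only uses ``no jumps,'' it does not use the full hypothesis and cannot be correct, since the counterexample would then violate your conclusion. Finally, two minor points: the identity $\int_{\{u=t\}}|Du|^{-1}\,d{\mathcal H}^{N-1}=-\mu'(t)$ is in general only an inequality $\le$ (the critical set of $u$ may contribute to $-\mu'$), which fortunately still feeds the H\"older bound in the right direction but should be stated as such; and ``$\Omega$ equals $x_0+\Omega^*$ up to a null set'' does not follow from $u=u^*(\cdot-x_0)$ a.e.\ unless one also knows $u>0$ a.e.\ in $\Omega$.
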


\subsection{Solutions to the Euler-Lagrange equation}
\label{nonsmoothsect}

For any $u\in W^{1,p}_0(\Omega)$, define 
\begin{equation}\label{defvu}
V_u=\big\{v\in W^{1,p}_0(\Omega)\cap L^\infty(\Omega):\,
u\in L^{\infty}(\{x\in\Omega:\,v(x)\not=0\})\big\}.
\end{equation}
The vector space $V_u$ was firstly introduced by {\sc Degiovanni} and {\sc Zani} in~\cite{degzan} in the case $p=2$.
In~\cite{degzan} it is also proved that $V_u$ with $p=2$ is dense in $W^{1,2}_0(\Omega)$. This fact
extends with the same proof to the general case of any $p\neq 2$. 
Let $J:W^{1,p}_0(\Omega)\to\R\cup\{+\infty\}$ be the functional 
$$
J(u)=\int_\Omega j(u,|Du|).
$$
The following fact can be easily checked. It shows that $V_u$ is a good test space
to differentiate non-smooth functionals of calculus of variations satisfying 
suitable growth conditions.

\begin{proposition}
	\label{derivate}
Assume conditions~\eqref{j2}, \eqref{j3} and \eqref{j4}. 
Then, for every $u\in W^{1,p}_0(\Omega)$ with $J(u)<+\infty$
and every $v\in V_u$ we have
$$
j_s(u,|Du|)v\in L^1(\Omega),\qquad
j_t(u,|Du|)\frac{Du}{|Du|}\cdot D v\in L^1(\Omega),
$$
with the agreement that $j_t(u,|Du|)\frac{Du}{|Du|}=0$ when $|Du|=0$ (in view of~\eqref{j4}). Moreover,
the function $\{t\mapsto J(u+tv)\}$ is of class $C^1$ and 
$$
J'(u)(v)=\int_{\Omega}j_t(u,|Du|)\frac{Du}{|Du|}\cdot Dv
+\int_{\Omega}j_s(u,|Du|)v.
$$
\end{proposition}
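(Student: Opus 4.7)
The plan is to split the argument into two parts: first, the $L^1$-integrability of the two candidate integrands; second, the identification of $\phi(t):=J(u+tv)$ as a $C^1$ function via differentiation under the integral. Throughout I would exploit the defining feature of $V_u$, namely that $u\in L^\infty(\{v\neq 0\})$, together with the Stampacchia-type fact that $Dv=0$ a.e.\ on $\{v=0\}$ for any $v\in W^{1,p}_0(\Omega)$; these two ingredients allow one to restrict every estimate to the region $\{v\neq 0\}$ where $u$ is bounded.

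For integrability, observe first that $J(u)<+\infty$ and \eqref{j2} give $Du\in L^p(\Omega)$. On $\{v\neq 0\}$ one has $|u|\leq M:=\|u\|_{L^\infty(\{v\neq 0\})}$, so $\beta(|u|)\leq \beta(M)$ and $\gamma(|u|)\leq \gamma(M)$. Then \eqref{j3} yields
$$|j_s(u,|Du|)v|\leq \beta(M)\|v\|_{L^\infty(\Omega)}|Du|^p\chi_{\{v\neq 0\}}\in L^1(\Omega),$$
while \eqref{j4} together with H\"older's inequality (with conjugate exponents $p/(p-1)$ and $p$) gives
$$\int_\Omega\Big|j_t(u,|Du|)\tfrac{Du}{|Du|}\cdot Dv\Big|\leq \gamma(M)\|Du\|_{L^p(\Omega)}^{p-1}\|Dv\|_{L^p(\Omega)}<+\infty.$$

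For the $C^1$ statement, I would first verify that the map $g(s,\xi):=j(s,|\xi|)$ is actually $C^1$ on $\R\times\R^N$, even at $\xi=0$: condition \eqref{j4} with $p>1$ forces $j_t(s,0)=0$, and \eqref{j2} together with $j(s,0)=0$ gives the pointwise bound $g(s,\xi)-g(s,0)=O(|\xi|^p)=o(|\xi|)$, so the $\xi$-gradient extends continuously to zero at the origin. Hence, for each fixed $x$, the map $F(t,x):=j(u+tv,|Du+tDv|)$ is $C^1$ in $t$, with derivative
$$\partial_t F(t,x)=j_s(u+tv,|Du+tDv|)v+j_t(u+tv,|Du+tDv|)\tfrac{(Du+tDv)\cdot Dv}{|Du+tDv|}$$
(interpreted as $0$ on $\{Du+tDv=0\}$ by the same convention). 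Writing $F(t+h,x)-F(t,x)=h\int_0^1\partial_t F(t+sh,x)\,ds$, I would then apply dominated convergence: for $|t|,|h|\leq T$, on $\{v\neq 0\}$ one has $|u+\tau v|\leq M_T:=M+T\|v\|_{L^\infty}$, and \eqref{j3}--\eqref{j4} produce a dominating function
$$\beta(M_T)(|Du|+T|Dv|)^p\|v\|_{L^\infty}+\gamma(M_T)(|Du|+T|Dv|)^{p-1}|Dv|,$$
integrable by the same H\"older estimate applied with $|Du|+T|Dv|$ in place of $|Du|$. This yields differentiability of $\phi$ with the stated formula; continuity of $\phi'$ follows from one more application of dominated convergence using continuity of $j_s,j_t$.

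The two delicate points on which I would concentrate the exposition are (i) the $C^1$ character of $g(s,\xi)$ at $\xi=0$, which legitimises the pointwise chain rule and the convention $j_t(u,|Du|)\tfrac{Du}{|Du|}=0$ on $\{|Du|=0\}$, and (ii) the invocation of $Dv=0$ a.e.\ on $\{v=0\}$, which is what really enables one to restrict all estimates to $\{v\neq 0\}$. Once these are settled, the rest amounts to routine bookkeeping with the growth conditions \eqref{j2}--\eqref{j4}.
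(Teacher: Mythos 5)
The paper does not actually prove Proposition~\ref{derivate}; it only declares that it ``can be easily checked,'' so there is no in-text argument to compare against. Your proof is correct and supplies exactly the standard verification: the two essential ingredients you isolate --- that $u\in L^\infty(\{v\neq 0\})$ by definition of $V_u$, and that $Dv=0$ a.e.\ on $\{v=0\}$ (Stampacchia) --- are precisely what allow every estimate to be localised to the set where $u$ is bounded, after which \eqref{j2}--\eqref{j4} and dominated convergence do the rest, and your observation that \eqref{j4} with $p>1$ forces $j_t(s,0)=0$ is what makes $\xi\mapsto j(s,|\xi|)$ genuinely $C^1$ at $\xi=0$ and legitimises the pointwise chain rule. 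The only slip is cosmetic: with $|t|,|h|\leq T$ and $s\in[0,1]$ the intermediate parameter $\tau=t+sh$ satisfies $|\tau|\leq 2T$, so the constant in $M_T$ and in the dominating function should be $2T$ rather than $T$; this has no bearing on the argument.
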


\noindent
We recall Definitions 4.3 and 5.5 from~\cite{campad}, respectively, adapted to our concrete framework.

\begin{definition}
Let $u\in W^{1,p}_0(\Omega)$ with $J(u)<+\infty$. For every $v\in W^{1,p}_0(\Omega)$ 
and $\eps>0$ we define $J_\eps^0(u;v)$ to be the infimum of the $r$'s in $\R$ such that
there exist $\delta>0$ and a continuous function 
$$
{\mathcal V}: B_\delta (u,J(u))\cap {\rm epi}(J)\times ]0,\delta]\to B_\eps(v),
$$
which satisfies
\begin{equation*}
J(\xi+t{\mathcal V}((\xi,\mu),t))\leq \mu+rt,
\end{equation*}	
whenever $(\xi,\mu)\in B_\delta(u,J(u))\cap {\rm epi}(J)$
and $t\in ]0,\delta]$. Finally, we set
$$
J^0(u;v):=\sup_{\eps>0}J_\eps^0(u;v).
$$	
\end{definition}

\begin{definition}
	\label{defbar}
Let $u\in W^{1,p}_0(\Omega)$ with $J(u)<+\infty$. For every $v\in W^{1,p}_0(\Omega)$ 
and $\eps>0$ we define $\bar J_\eps^0(u;v)$ to be the infimum of the $r$'s in $\R$ such that
there exist $\delta>0$ and a continuous function 
$$
{\mathcal H}: B_\delta (u,J(u))\cap {\rm epi}(J)\times [0,\delta]\to W^{1,p}_0(\Omega),
$$
which satisfies ${\mathcal H}((\xi,\mu),0)=\xi$, 
\begin{equation}
\label{norma}
\Big\|\frac{{\mathcal H}((\xi,\mu),t_1)-{\mathcal H}((\xi,\mu),t_2)}{t_1-t_2}-v\Big\|_{W^{1,p}(\Omega)}<\eps,
\end{equation}	
and $J({\mathcal H}((\xi,\mu),t))\leq \mu+rt$, whenever $(\xi,\mu)\in B_\delta(u,J(u))\cap {\rm epi}(J)$
and $t,t_1,t_2\in [0,\delta]$ with $t_1\neq t_2$. Finally, we set
$$
\bar J^0(u;v):=\sup_{\eps>0}\bar J_\eps^0(u;v).
$$	
\end{definition}

\noindent
As remarked in~\cite[cf.~p.1037]{campad} it always holds $J^0(u;v)\leq \bar J^0(u;v)$. 
Recalling that $\partial J(u)$ is the subdifferential introduced in \cite[Definition 4.1]{campad}, we have the following

\begin{lemma}
	\label{subdcaract}
	Assume conditions~\eqref{j2}, \eqref{j3} and \eqref{j4}. 
	Let $u\in W^{1,p}_0(\Omega)$ with $J(u)<+\infty$. Then, the following facts hold:
	\begin{itemize}
		\item[{\rm (i)}] for every $v\in V_u$, we have
		$$
		J^0(u;v)\leq \bar J^0(u;v)\leq \int_\Omega j_t(u,|Du|)\frac{Du}{|Du|}\cdot Dv +
		\int_\Omega j_s(u,|Du|)v.
		$$
		\item[{\rm (ii)}] if $\partial J(u)\neq\emptyset$, then $\partial J(u)=\{\alpha\}$ with $\alpha\in W^{-1,p'}(\Omega)$ and
		\begin{equation*}
			\int_\Omega j_t(u,|Du|)\frac{Du}{|Du|}\cdot D v +
			\int_\Omega j_s(u,|Du|)v=\langle \alpha,v\rangle,
		\end{equation*}
		for all $v\in V_u$.
	\end{itemize}
\end{lemma}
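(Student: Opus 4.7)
The plan for part (i) is to exhibit an explicit continuous homotopy $\mathcal{H}$ in the sense of Definition~\ref{defbar} realizing the claimed upper bound for $\bar J^0(u;v)$. Given $v \in V_u$, pick $M > 0$ with $\|v\|_{L^\infty(\Omega)} \leq M$ and $|u| \leq M$ a.e.\ on $\{v \neq 0\}$, choose a smooth cutoff $\vartheta: \R \to [0,1]$ with $\vartheta \equiv 1$ on $[-2M, 2M]$ and $\vartheta \equiv 0$ outside $[-3M, 3M]$, and set
$$
\mathcal{H}((\xi, \mu), t) := \xi + t\, \vartheta(\xi)\, v.
$$
This map is jointly continuous by the Sobolev chain-rule properties of $\vartheta$, satisfies $\mathcal{H}((\xi, \mu), 0) = \xi$, and its crucial feature is that $\vartheta(\xi) v \in V_\xi$ for every admissible $\xi$, since the support of $\vartheta(\xi) v$ is contained in $\{|\xi| \leq 3M\}$. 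This makes Proposition~\ref{derivate} directly applicable to differentiate $J$ along the perturbation.

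The difference quotient in~\eqref{norma} is identically $\vartheta(\xi) v$; since $|u| \leq 2M$ on $\{v \neq 0\}$ we have $\vartheta(u) v = v$, and the continuity of $\xi \mapsto \vartheta(\xi) v$ from $W^{1,p}_0(\Omega)$ into itself (via the chain rule and boundedness of $v$) gives $\|\vartheta(\xi) v - v\|_{W^{1,p}(\Omega)} < \eps$ uniformly on a small enough ball around $u$. For the $J$-estimate I would write
$$
J(\xi + t \vartheta(\xi) v) - J(\xi) = \int_0^t \frac{d}{ds} J(\xi + s \vartheta(\xi) v) \, ds
$$
and expand the integrand by Proposition~\ref{derivate}. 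The support constraint keeps the quantities under the integrals inside $\{|\xi + s\vartheta(\xi) v| \leq 4M\}$, where the growth bounds~\eqref{j3}--\eqref{j4} provide domination and continuity of $j_s, j_t, j_{st}$ allows passing to the limit $(\xi, s) \to (u, 0)$ by dominated convergence. The $s$-integrand then lies within an arbitrary $\eta > 0$ of the target quantity $\int_\Omega j_t(u, |Du|) \frac{Du}{|Du|} \cdot Dv + \int_\Omega j_s(u, |Du|) v$ uniformly on the epigraph ball, and combining with $J(\xi) \leq \mu$ yields $J(\mathcal{H}((\xi, \mu), t)) \leq \mu + rt$ for $r$ arbitrarily close to the target. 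Letting $\eta,\eps \to 0$ gives the bound on $\bar J^0(u; v)$, while the first inequality $J^0(u; v) \leq \bar J^0(u; v)$ is the one already recorded in the excerpt from~\cite[p.~1037]{campad}.

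Part (ii) is then formal. By~\cite[Def.~4.1]{campad}, any $\alpha \in \partial J(u)$ lies in $W^{-1, p'}(\Omega)$ and satisfies $\langle \alpha, v \rangle \leq J^0(u; v)$ for all $v \in W^{1,p}_0(\Omega)$. Applying (i) to both $v$ and $-v$, both in $V_u$, and using linearity of the right-hand side in $v$, forces
$$
\langle \alpha, v \rangle = \int_\Omega j_t(u, |Du|)\frac{Du}{|Du|}\cdot Dv + \int_\Omega j_s(u, |Du|) v \qquad \text{for all } v \in V_u.
$$
Density of $V_u$ in $W^{1,p}_0(\Omega)$ (the argument of~\cite{degzan}, extended to $p \neq 2$ as noted above Proposition~\ref{derivate}) then determines $\alpha$ uniquely, yielding simultaneously $\partial J(u) = \{\alpha\}$ and the representation formula.

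The main obstacle is the $J$-estimate in part (i): the subtlety is that $V_u$-membership of $v$ only controls $u$ on $\{v \neq 0\}$ and gives no pointwise control of a nearby $\xi$, so the naive choice $\mathcal{H}((\xi, \mu), t) = \xi + tv$ would push one outside the regime where Proposition~\ref{derivate} applies. The truncation $\vartheta(\xi)$ repairs this, at the price of a careful continuity/density argument that recovers $v$ in the $W^{1,p}$-limit as $\xi \to u$.
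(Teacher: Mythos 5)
Your proof is correct and follows essentially the same route as the paper's: the central device in both is to cut off the perturbation by a function of the \emph{moving} point $\xi$ (your $\vartheta(\xi)v$, the paper's $H(\xi/k_0)v$) so that it lies in $V_\xi$, then to combine Proposition~\ref{derivate} with a dominated-convergence argument to estimate $J(\xi+t\vartheta(\xi)v)-J(\xi)$ uniformly on a small epigraph ball, and part (ii) is the same $\pm v$ plus density argument. The only cosmetic difference is that you calibrate $\vartheta$ to an $L^\infty$-bound of $u$ on $\{v\neq 0\}$ so that $\vartheta(u)v=v$ exactly, whereas the paper first chooses $k_0$ large so that $H(u/k_0)v$ merely approximates $v$; both work, and yours is a mild streamlining of the same idea.
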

\begin{proof}
Let $\eta>0$ with $J(u)<\eta$. Moreover, let
$v\in V_u$ and $\eps>0$. Take now $r\in\R$ with
\begin{equation}
	\label{step1}
\int_\Omega j_t(u,|Du|)\frac{Du}{|Du|}\cdot D v +
\int_\Omega j_s(u,|Du|)v<r.
\end{equation}
Let $H$ be a $C^\infty(\R)$ function such that
\begin{equation}
	\label{defh}
H(s)=1\,\,\, \text{on $[-1,1]$},
\quad H(s)=0\,\,\, \text{outside $[-2,2]$},
\quad |H'(s)|\leq 2\,\,\, \text{on $\R$}.
\end{equation}
Then, there exists $k_0\geq 1$ such that
\begin{equation}
	\label{prima2}
\Big\|H(\frac{u}{k_0})v-v\Big\|_{W^{1,p}(\Omega)}<\eps,
\end{equation}
and
\begin{equation}
	\label{step2}
\int_\Omega j_t(u,|Du|)\frac{Du}{|Du|}\cdot D \Big(H(\frac{u}{k_0})v\Big) +
\int_\Omega j_s(u,|Du|)H(\frac{u}{k_0})v<r.
\end{equation}
In fact, setting $v_k=H(u/k)v$, we have $v_k\in V_u$ for every
$k\geq 1$ and $v_k$ converges to $v$ in $W^{1,p}_0(\Omega)$, yielding inequality~\eqref{prima2},
for $k$ large enough. By Proposition~\ref{derivate}, we can consider $J'(u)(v_k)$ for all $k\geq 1$ 
and, as $k$ goes to infinity, for a.e.\ $x\in\Omega$, we have
\begin{gather*}
j_s(u(x),|D u(x)|)v_k(x)\to
j_s(u(x),|D u(x)|)v(x),\qquad
\\
j_t(u(x),|D u(x)|)\frac{Du(x)}{|Du(x)|}\cdot Dv_k (x)\to
j_t(u(x),|D u(x)|)\frac{Du(x)}{|Du(x)|}\cdot Dv(x),
\end{gather*}
as well as
\begin{gather*}
\big|j_s(u,|Du|)v_k\big|\leq |j_s(u,|D u|)| |v|,
\\
\big|j_t(u,|Du|)\frac{Du}{|Du|}\cdot Dv_k\big|\leq
\left|j_t(u,|Du|)||Dv\right|
+2|v||j_t(u,|Du|)||D u|.
\end{gather*}
Since $v\in V_u$ and by the growth estimates~\eqref{j3}-\eqref{j4}, by dominated convergence we get 
\begin{gather*}
\lim\limits_{k\to \infty}\int_\Omega j_s(u,|Du|)v_k\,
=\int_\Omega j_s(u,|D u|)v\, ,
\\
\lim\limits_{k\to \infty}\int_{\Omega}j_t(u,|D
u|)\frac{Du}{|Du|}\cdot Dv_k =\int_{\Omega}j_t(u,|Du|)\frac{Du}{|Du|}\cdot Dv,
\end{gather*}
which, together with~\eqref{step1}, yields~\eqref{step2}. 
Let us now prove that there exists $\delta_1>0$ such that 
\begin{equation}
	\label{prima3-mod}
\Big\|H(\frac{z}{k_0})v-v\Big\|_{W^{1,p}(\Omega)}<\eps,
\end{equation}
as well as
\begin{align}
	\label{step3-mod}
&	\int_\Omega j_t(z+\theta H(\frac{z}{k_0})v,|Dz+\theta D\big(H(\frac{z}{k_0})v\big)|)\frac{Dz+\theta D\big(H(\frac{z}{k_0})v\big)}{|Dz+\theta D\big(H(\frac{z}{k_0})v\big)|}\cdot D \Big(H(\frac{z}{k_0})v\Big) \\
&	+
\int_\Omega j_s(z+\theta H(\frac{z}{k_0})v,|Dz+\theta D\big(H(\frac{z}{k_0})v\big)|)H(\frac{z}{k_0})v<r,   \notag
\end{align}
for all $z\in B(u,\delta_1)\cap J^\eta$ and $\theta\in [0,\delta_1)$. Indeed, take
$u_n\in J^\eta$ such that $u_n \to u$ strongly in $W^{1,p}_0(\Omega)$, $\theta_n\to 0$ as $n\to\infty$ and consider
$v_n=H(u_n/k_0)v\in V_{u_n}$. It follows that $v_n$ converges to $H(u/k_0)v$ 
strongly in $W^{1,p}_0(\Omega)$, so that~\eqref{prima3-mod} follows by~\eqref{prima2}. Now, 
for a.e.~$x\in\Omega$, we have
\begin{gather*}
j_s(u_n(x)+\theta_n v_n(x),|Du_n(x)+\theta_n Dv_n(x)|)v_n(x) \to j_s(u(x),|Du(x)|)H\big(\frac{u(x)}{k_0}\big)v(x)\\
j_t(u_n(x)+\theta_n v_n(x),|Du_n(x)+\theta_n Dv_n(x)|)\frac{Du_n(x)+\theta_n Dv_n(x)}{|Du_n(x)+\theta_n Dv_n(x)|}\cdot Dv_n(x) \\
\to
j_t(u(x),|Du(x)|)\frac{Du(x)}{|Du(x)|}\cdot D\big(H\big(\frac{u}{k_0}\big)v\big)(x).
\end{gather*}	
Moreover, we have
\begin{gather*}
\left|j_s(u_n+\theta_n v_n,|Du_n+\theta_n Dv_n|)v_n\right|  \\
\leq  2^{p-1}\beta(2k_0+\|v\|_{L^\infty(\Omega)})
\|v\|_{L^\infty(\Omega)}(|Du_n|^p+|Dv_n|^p),   \\
\big|j_t(u_n+\theta_n v_n,|Du_n+\theta_n Dv_n|)\frac{Du_n+\theta_n Dv_n}{|Du_n+\theta_n Dv_n|}\cdot Dv_n\big| \\
\leq 2^{p-1}\gamma(2k_0+\|v\|_{L^\infty(\Omega)})(\big |D u_n|^{p-1}|Dv_n|+|Dv_n|^p\big).
\end{gather*}
Then, by dominated convergence we obtain
\begin{gather*}
\lim_{n\to \infty}\int_\Omega j_s(u_n+\theta_n v_n,|Du_n+\theta_n Dv_n|)v_n \\
=\int_\Omega j_s(u,|Du|)H\big(\frac{u}{k_0}\big)v\,,\\
\lim_{n\to \infty}\int_{\Omega}j_t(u_n+\theta_n v_n,|Du_n+\theta_n Dv_n|)\frac{Du_n+\theta_n Dv_n}{|Du_n+\theta_n Dv_n|}\cdot Dv_n \\
=\int_{\Omega}j_t(u,|Du|)\frac{Du}{|Du|}\cdot D\Big( H\Big(
\frac{u}{k_0}\Big)v\Big),
\end{gather*}
which, in light of~\eqref{step2}, proves~\eqref{step3-mod}. Then, taking into account that
$\{t\mapsto J(z+t H(\frac{z}{k_0})v)\}$ is of class $C^1$, Lagrange theorem and~\eqref{step3-mod} yield,
for some $\theta\in [0,t]$,
\begin{align*}
& J(z+t H(\frac{z}{k_0})v)-J(z)=t J'(z+\theta H(\frac{z}{k_0})v) \Big(H(\frac{z}{k_0})v\Big) \\
&=t\int_\Omega \Big[ j_t(z+\theta  H(\frac{z}{k_0})v,|Dz+\theta D\big(H(\frac{z}{k_0})v\big)|)\frac{Dz+\theta  D\big(H(\frac{z}{k_0})v\big)}{|Dz+\theta D\big(H(\frac{z}{k_0})v\big)|}\cdot D \big(H(\frac{z}{k_0})v\big) \\
	&	+ j_s(z+\theta  H(\frac{z}{k_0})v,|Dz+\theta  D\big(H(\frac{z}{k_0})v\big)|)H(\frac{z}{k_0})v\Big]dx \leq rt	,
\end{align*}
for all $z\in B(u,\delta_1)\cap J^\eta$ and $t\in [0,\delta_1)$.
Let now $\delta\in (0,\delta_1]$
with $J(u)+\delta<\eta$, and define the continuous function
${\mathcal H}: B_\delta (u,J(u))\cap {\rm epi}(J)\times [0,\delta]\to W^{1,p}_0(\Omega)$
by setting
$$
{\mathcal H}((z,\mu),t)=z+t H(\frac{z}{k_0})v.
$$
Then, by direct computation, condition~\eqref{norma} in Definition~\ref{defbar}
is satisfied by~\eqref{prima3-mod}. 
Notice that, for all $((z,\mu),t)\in B_\delta (u,J(u))\cap {\rm epi}(J)\times [0,\delta]$, we have
$z\in B(u,\delta_1)\cap J^\eta$ and $t\in [0,\delta_1)$. Hence, by the above inequality, we have
$$
J({\mathcal H}((z,\mu),t))\leq J(z) +rt\leq \mu+rt.
$$
whenever $(z,\mu)\in B_\delta(u,J(u))\cap {\rm epi}(J)$ and $t\in [0,\delta]$. Then, according to
Definition~\ref{defbar}, we can conclude that $\bar J_\eps^0(u;v)\leq r$. 
By the arbitrariness of $r$, it follows that
$$
\bar J_\eps^0(u;v)\leq \int_\Omega j_t(u,|Du|)\frac{Du}{|Du|}\cdot Dv +
\int_\Omega j_s(u,|Du|)v.
$$
Hence, by the arbitrariness of $\eps$, we get
\begin{equation}
	\label{conclpreli}
\bar J^0(u;v)\leq \int_\Omega j_t(u,|Du|)\frac{Du}{|Du|}\cdot Dv +
\int_\Omega j_s(u,|Du|)v,
\end{equation}
for all $v\in V_u$, concluding the proof of assertion (i). Concerning (ii), if $\alpha\in \partial J(u)\subset W^{-1,p'}(\Omega)$,
by (i) it follows (recall~\cite[Corollary 4.7(i)]{campad} for the first inequality below)
$$
\langle\alpha,v\rangle\leq J^0(u;v)\leq \int_\Omega j_t(u,|Du|)\frac{Du}{|Du|}\cdot Dv +
\int_\Omega j_s(u,|Du|)v,
$$
for all $v\in V_u$. Since we can exchange $v$ with $-v$ we get
$$
\langle\alpha,v\rangle=\int_\Omega j_t(u,|Du|)\frac{Du}{|Du|}\cdot Dv +
\int_\Omega j_s(u,|Du|)v,
$$
for all $v\in V_u$. By density of $V_u$ in $W^{1,p}_0(\Omega)$, $\partial J(u)=\{\alpha\}$.
This concludes the proof.
\end{proof}

Finally, we have the following 

\begin{proposition}
	\label{semic-lagrange}
	Assume~\eqref{j2}, \eqref{j3}, \eqref{j4}, \eqref{gg1} and \eqref{gg1-G}. Let ${\mathcal C}$
	be the constraint~\eqref{ilvincolo} and let ${\mathcal E}:W^{1,p}_0(\Omega)\to\R\cup\{+\infty\}$ be the functional
	$$
	{\mathcal E}(u)=\int_\Omega j(u,|Du|)-\int_\Omega F(|x|,u).
	$$
	Then the functional ${\mathcal E}^*:W^{1,p}_0(\Omega)\to\R\cup\{+\infty\}$ defined by
	$$
	{\mathcal E}^*(u)
	=
	\begin{cases}
	{\mathcal E}(u) & \text{for $u\in {\mathcal C}$}, \\
	+\infty & \text{for $u\not \in {\mathcal C}$},
	\end{cases}
	$$
     is lower semi-continuous on $W^{1,p}_0(\Omega)$. Moreover, for every solution 
$u\in {\mathcal C}$ to problem~\eqref{minpb} 
there exists a Lagrange multiplier $\lambda\in\R$ such that
\begin{equation*}
	\int_\Omega j_t(u,|Du|)\frac{Du}{|Du|}\cdot D \varphi +
	\int_\Omega j_s(u,|Du|)\varphi -\int_\Omega f(|x|,u)\varphi =\lambda \int_\Omega g(u)\varphi,
\end{equation*}
for all $\varphi\in V_u$.
\end{proposition}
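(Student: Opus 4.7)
The plan is to prove the two assertions (lower semi-continuity and the Lagrange multiplier identity) separately, leaning on Lemma~\ref{subdcaract} for the concrete form of the Euler equation and on the fact that the terms involving $F$ and $G$ are of class $C^1$ on $W^{1,p}_0(\Omega)$.

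First I will establish lower semi-continuity. By the growth conditions~\eqref{gg1} and~\eqref{gg1-G}, together with the Sobolev embedding $W^{1,p}_0(\Omega)\hookrightarrow L^{p^*}(\Omega)$, standard arguments (Krasnosel'skii-type continuity of Nemytskii operators plus the integrability of $a$) show that the functionals $u\mapsto \int_\Omega F(|x|,u)$ and $u\mapsto \int_\Omega G(u)$ are Fr\'echet $C^1$ on $W^{1,p}_0(\Omega)$, with derivatives given by $f(|x|,u)$ and $g(u)$ respectively. In particular, $\mathcal{C}$ is closed in $W^{1,p}_0(\Omega)$ and the indicator $I_{\mathcal{C}}$ is lsc. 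For $J$, I will take $u_n\to u$ strongly in $W^{1,p}_0(\Omega)$, pass to a subsequence along which $J(u_n)\to\liminf J(u_n)$ and $u_n, Du_n\to u, Du$ a.e.\ on $\Omega$, and apply Fatou's lemma using the nonnegativity and continuity of $j$ coming from~\eqref{j2}. Combining the three ingredients shows that $\mathcal{E}^*$ is lower semi-continuous on $W^{1,p}_0(\Omega)$.

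Next I will derive the Euler-Lagrange equation. Since $u\in \mathcal{C}$ is a minimum of $\mathcal{E}^*$ over $W^{1,p}_0(\Omega)$ and $\mathcal{E}^*$ is lsc, the general nonsmooth critical point theory of~\cite{campad} applies and yields $0\in \partial \mathcal{E}^*(u)$. The non-degeneracy~\eqref{nondegen} gives $g(u)\not\equiv 0$ (because $u\not\equiv 0$ on $\mathcal{C}$), so the constraint $\int_\Omega G(u)=1$ is $C^1$-regular at $u$. Applying the Lagrange multiplier rule together with the sum rule for subdifferentials under $C^1$ perturbations (valid in the Campa-Degiovanni framework since $\int F$ and $\int G$ are $C^1$), I obtain $\lambda\in\R$ such that
$$
\lambda\, g(u) + f(|x|,u) \in \partial J(u) \subset W^{-1,p'}(\Omega).
$$
Invoking Lemma~\ref{subdcaract}(ii), the subdifferential $\partial J(u)$ is a singleton whose action on $V_u$ is the concrete integral expression, so the inclusion above rewrites as
$$
\int_\Omega j_t(u,|Du|)\frac{Du}{|Du|}\cdot D\varphi + \int_\Omega j_s(u,|Du|)\varphi = \int_\Omega f(|x|,u)\varphi + \lambda\int_\Omega g(u)\varphi
$$
for every $\varphi\in V_u$, which is the claim.

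The hard part is justifying the nonsmooth Lagrange multiplier step: while~\cite{campad} provides the abstract machinery, one has to check in our concrete setting that the $C^1$ sum rule for $\partial(J-\int F)$ is available and that the constraint defined by $\int G$ admits a regularity condition compatible with the subdifferential used. As a fallback, one can argue by hand via constraint-preserving variations $u+t\varphi+\sigma(t)w$, with $w\in V_u$ chosen so that $\int_\Omega g(u)w=1$ and $\sigma(t)$ furnished by the implicit function theorem applied to $t\mapsto \int_\Omega G(u+t\varphi+\sigma w)=1$; testing $\mathcal{E}$ on this curve, exploiting the upper estimate $\bar J^0(u;\cdot)\leq L(\cdot)$ from Lemma~\ref{subdcaract}(i) and the fact that $V_u$ is closed under negation, one passes from a one-sided inequality to the announced identity with $\lambda=L(w)-\int_\Omega f(|x|,u)w$.
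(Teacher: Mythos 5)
Your proof follows essentially the same route as the paper's: establish lower semi-continuity from the growth conditions, invoke the Campa--Degiovanni machinery to get $0\in\partial\mathcal{E}^*(u)$, use the $C^1$ sum rule for the $F$ and $G$ perturbations, apply a nonsmooth Lagrange multiplier rule, and finish with Lemma~\ref{subdcaract}(ii). The part you explicitly flag as ``the hard part'' — verifying the regularity hypotheses of the abstract Lagrange multiplier rule — is precisely what the paper handles and you leave as a sketch: by \eqref{nondegen} and density of $V_u$ in $W^{1,p}_0(\Omega)$ one picks $v_+\in V_u$ with $\int_\Omega g(u)v_+>0$, and then Proposition~\ref{derivate} together with Lemma~\ref{subdcaract}(i) shows $\bar J^0(u;v_+)<+\infty$ and $\bar J^0(u;-v_+)<+\infty$; this is exactly the hypothesis required by \cite[Corollary~5.9(ii)]{campad}, whose conclusion (combined with \cite[Corollary~5.3(ii)]{campad} for the $C^1$ sum rule) gives $0\in\partial J(u)+\partial V(u)+\R\partial W(u)$. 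Your implicit-function-theorem fallback would also work in principle, but it ultimately requires the same ingredients (a suitable direction $w\in V_u$ with $\int_\Omega g(u)w\neq 0$ and one-sided differentiability estimates from Lemma~\ref{subdcaract}(i)), so it is not really an independent alternative; the direct invocation of Corollary~5.9(ii) as in the paper is cleaner once you have identified the admissible direction $v_+$ and the finiteness of $\bar J^0(u;\pm v_+)$.
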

\begin{proof}
	It is readily seen that ${\mathcal E}^*$ is lower semi-continuous,
	by conditions~\eqref{gg2} and~\eqref{gg2-G}.	
Let $u\in {\mathcal C}$ be any solution to problem~\eqref{minpb} 
(it is $J(u)<+\infty$, since ${\mathcal E}(u)=m<+\infty$).
Notice that ${\mathcal E}^*={\mathcal E}+{\rm I}_{{\mathcal C}}$,
being ${\rm I}_{{\mathcal C}}$ the indicator function of ${\mathcal C}$, ${\rm I}_{{\mathcal C}}(v)=0$ if $v\in {\mathcal C}$
and ${\rm I}_{{\mathcal C}}(v)=+\infty$ if $v\not \in {\mathcal C}$.
If $|d{\mathcal E}^*|(u)$ denotes the weak slope of ${\mathcal E}^*$ 
(see~\cite[Definition 2.1]{campad}), it follows $|d{\mathcal E}^*|(u)=0$, and then 
$0\in\partial {\mathcal E}^*(u)$, by \cite[Theorem 4.13(iii)]{campad}. 
Setting 
$$
V(u)=-\int_\Omega F(|x|,u)dx,\quad\,
W(u)=\int_\Omega G(u)dx,
$$ 
$V,W$ are $C^1$ functionals, in light of~\eqref{gg1} and~\eqref{gg1-G}, and 
$\partial V(u)=\{-f(|x|,u)\}$, $\partial W(u)=\{g(u)\}$. Moreover, by~\eqref{nondegen},
we can find $\hat v \in W^{1,p}_0(\Omega)$ such that 
$\int_\Omega g(u)\hat v>0$ and, by density of $V_u$ in $W^{1,p}_0(\Omega)$, $v_+ \in V_u$ with 
$\int_\Omega g(u)v_+>0$. Taking into account Proposition~\ref{derivate} and
conclusion (i) of Lemma~\ref{subdcaract}, $\bar J^0(u;v_+)<+\infty$
and $\bar J^0(u;-v_+)<+\infty$. Therefore, we are allowed to apply~\cite[Corollary 5.9(ii)]{campad}, yielding 
$$
0\in  \partial {\mathcal E}(u)+\R \partial W(u)=\partial J(u)+\partial V(u)+\R\partial W(u),
$$ 
where the equality is justified by \cite[Corollary 5.3(ii)]{campad}, since $V$ is a $C^1$ functional. 
Finally, since $\partial J(u)\neq\emptyset$, assertion (ii) of Lemma~\ref{subdcaract} 
allows to conclude the proof.
\end{proof}

\medskip

\section{Proof of Theorem~\ref{mainthALL}}
\noindent
Let $u\in W^{1,p}_0(\Omega)$ be a given nonnegative solution to the minimum problem~\eqref{minpb}, namely
$$
j(u,|Du|)\in L^1(\Omega),\quad
\int_\Omega G(u)=1,\quad
m=\int_\Omega j(u,|Du|)-\int_\Omega F(|x|,u).
$$
We shall divide the proof into four steps.
\vskip4pt
\noindent
{\bf Step I (existence of approximating minimizers).}
In light of Proposition~\ref{convsymm}, we can find 
a sequence $(u_n)\subset W^{1,p}_0(\Omega)$ of polarizations of $u$, namely $u_n=u^{H_1\cdots H_n}$, such that
$u_n\to u^*$ strongly in $L^p(\Omega)$, as $n\to\infty$. Furthermore, we learn 
from~\eqref{fundineqpart} of Proposition~\ref{concretelem} that
	\begin{equation}
		\label{gradientpp}
	\|D u_n\|_{L^p(\Omega)}=\|D u\|_{L^p(\Omega)},\quad\text{for all $n\geq 1$}.
\end{equation}
In turn, the sequence $(u_n)$ is bounded in $W^{1,p}_0(\Omega)$	and, up to a subsequence,
it converges weakly to $u^*$ in $W^{1,p}_0(\Omega)$
(since $u_n\to u^*$ in $L^p(\Omega)$). Notice also that, again by virtue of Proposition~\ref{concretelem}, 
it follows that $j(u_n,|Du_n|)\in L^1(\Omega)$ for all $n\geq 1$ and 
	\begin{equation}
		\label{energyconv}
	\int_\Omega j(u_n,|Du_n|)=\int_\Omega j(u,|Du|),\quad\text{for all $n\geq 1$},
\end{equation}
	as well as
	$$
	\int_\Omega F(|x|,u_n)\geq \int_\Omega F(|x|,u),\quad
	\int_\Omega G(u_n)=\int_\Omega G(u)=1,\quad\text{for all $n\geq 1$}.
	$$
	In particular $(u_n)$ is a sequence of minimizers for problem~\eqref{minpb}, since $(u_n)\subset {\mathcal C}$ and 
	$$
	m\leq \int_\Omega j(u_n,|Du_n|)-\int_\Omega F(|x|,u_n)\leq \int_\Omega j(u,|Du|)-\int_\Omega F(|x|,u)=m.
	$$
	Furthermore, in light of Proposition~\ref{concretelem-star}, we have 
	$\int_\Omega G(u^*)=\int_\Omega G(u)=1$ and 
	$$
	\int_\Omega F(|x|,u^*)\geq \int_\Omega F(|x|,u),\quad
	\int_\Omega j(u^*,|D u^*|)\leq \int_\Omega j(u,|D u|),
	$$
	so that we obtain
	$$
	m\leq \int_\Omega j(u^*,|Du^*|)-\int_\Omega F(|x|,u^*)\leq \int_\Omega j(u,|Du|)-\int_\Omega F(|x|,u)=m,
	$$
	yielding that $u^*$ is a minimizer for~\eqref{minpb} too, and
	\begin{equation*}
	\int_\Omega j(u^*,|D u^*|)=\int_\Omega j(u,|D u|).
	\end{equation*}
	In conclusion, by~\eqref{energyconv}, we get
	\begin{equation}
			\label{energyconv-final}
		\int_\Omega j(u_n,|Du_n|)=\int_\Omega j(u^*,|Du^*|),\quad\text{for all $n\geq 1$}.
	\end{equation}
	By Proposition~\ref{semic-lagrange}, there exists a 
	sequence $(\lambda_n)\subset\R$ of Lagrange multipliers such that
\begin{equation}
	\label{equEL}
	\int_\Omega j_t(u_n,|Du_n|)\frac{Du_n}{|Du_n|}\cdot D \varphi +
	\int_\Omega j_s(u_n,|Du_n|)\varphi -\int_\Omega f(|x|,u_n)\varphi =\lambda_n \int_\Omega g(u_n)\varphi ,
\end{equation}
	for all $n\geq 1$ and any $\varphi\in V_{u_n}$. 
	\vskip6pt
	\noindent
	{\bf Step II (boundedness of $\boldsymbol{\lambda_n}$).}
	We claim that $(\lambda_n)$ is bounded in $\R$.
		To prove this, observe first that there exist $\hat v\in C^\infty_c(\Omega)$ and $\hat h\geq 1$ such that
		\begin{equation}
			\label{lambdalim}
		\lim_{n}\int_{\Omega} g(u_n)H\Big(\frac{u_n}{\hat h}\Big)\hat v\not= 0,
	\end{equation}	
being $H$ the cut-off function defined in~\eqref{defh}.
If this was not the case, for all $v\in C^\infty_c(\Omega)$ and any $h\geq 1$, we would find 
		$$
		\int_{\Omega} g(u^*)H\Big(\frac{u^*}{h}\Big)v=
		\lim_{n}\int_{\Omega} g(u_n)H\Big(\frac{u_n}{h}\Big)v=0,
		$$
		by dominated convergence. By the arbitrariness of $h\geq 1$ and dominated convergence, we get
		$$
		\int_{\Omega} g(u^*)v=0,\quad\text{for all $v\in C^\infty_c(\Omega)$},
		$$
		so that $g(u^*)=0$ a.e.~in $\Omega$. This is a contradiction, as $\int_\Omega G(u^*)=\int_\Omega G(u)=1$
		implies that $u^*\not \equiv 0$ which, by assumption~\eqref{nondegen}, yields $g(u^*)\not \equiv 0$.
		Observe now that, for every $v\in C^\infty_c(\Omega)$ and any $h\geq 1$, the function
		$H(\frac{u_n}{h})v$ belongs to $V_{u_n}$ and thus it
		is an admissible test function for~\eqref{equEL}.
Therefore, if $\hat v\in C^\infty_c(\Omega)$ and $\hat h\geq 1$ are as in formula~\eqref{lambdalim},
inserting $\varphi=H(\frac{u_n}{\hat h})\hat v$ into~\eqref{equEL}, we reach the identity 
		\begin{equation}
			\label{lambdaid}
		\lambda_n\int_{\Omega} g(u_n)H\Big(\frac{u_n}{\hat h}\Big)\hat v=\sum_{i=1}^3 I_i^n,
	\end{equation}
		where, denoted by $K$ the support of $\hat v$, we have set
		\begin{align*}
		I_1^n& :=\int_{K} H\Big(\frac{u_n}{\hat h}\Big)j_t(u_n,|D
		u_n|)\frac{Du_n}{|Du_n|}\cdot D \hat v,   \\
		I_2^n &:=\int_{K} \left[H\Big(\frac{u_n}{\hat h}\Big)j_s(u_n,|D u_n|)+
		H'\Big(\frac{u_n}{\hat h}\Big)j_t(u_n,|D u_n|)\frac{|D u_n|}{\hat h}\right]\hat v, \\
		I_3^n &:=-\int_{K} f(|x|,u_n)H\Big(\frac{u_n}{\hat h}\Big)\hat v.
	\end{align*}
	In turn, taking into account the growths~\eqref{j3},~\eqref{j4} and~\eqref{gg1}, it follows
	\begin{align*}
	|I_1^n| &\leq C\int_K |D u_n|^{p-1}|D \hat v|\leq C\Big(\int_\Omega |D u_n|^p\Big)^{\frac{p-1}{p}}\leq C, \\
	|I_2^n| &\leq C\int_K |D u_n|^p|\hat v|\leq C \int_\Omega |D u_n|^p\leq C, \\
	|I_3^n| &\leq \int_K (a(|x|)+C)|\hat v|\leq C,
	\end{align*}
	for some constant $C=C(\hat h)$, changing from one line to the next and independent of $n$. 
	Then the claim follows by combining~\eqref{lambdalim} and~\eqref{lambdaid} and $(\lambda_n)$ 
	admits a convergent subsequence.
	\vskip3pt
	\noindent
	{\bf Step III (pointwise convergence).}
	In this step we prove that, up to a subsequence, 
	\begin{equation}\label{convun2}
	D u_n(x)\to D u^*(x),\qquad \text{for a.e.\ $x\in\Omega$}.
	\end{equation}
	Let $\Omega_0$ be a fixed bounded subdomain of $\Omega$ (let $\Omega_0=\Omega$ if $\Omega$ is a ball). We already know that
	\begin{equation}\label{weakustarbdd}
	u_n\rightharpoonup u^*\quad\text{weakly in $W^{1,p}(\Omega_0)$}.
\end{equation}
	As we have already noticed,
	for all $h\geq 1$ and $v\in C^\infty_c(\Omega_0)$, 
	the function $H(\frac{u_n}{h})v$ belongs to the space $V_{u_n}$. 
	Therefore, inserting it into~\eqref{equEL}, we reach
	\begin{gather*}
	\int_{\Omega_0} H\Big(\frac{u_n}{h}\Big)j_t(u_n,|D
	u_n|)\frac{Du_n}{|Du_n|}\cdot D v\\
	=-\int_{\Omega_0} \left[H\Big(\frac{u_n}{h}\Big)j_s(u_n,|D u_n|)+
	H'\Big(\frac{u_n}{h}\Big)j_t(u_n,|D u_n|)\frac{|D u_n|}{h}\right]v\\
	+\int_{\Omega_0} f(|x|,u_n)H\Big(\frac{u_n}{h}\Big)v+\lambda_n\int_{\Omega_0} g(u_n)H\Big(\frac{u_n}{h}\Big)v,
	\end{gather*}
	for all $v\in C^\infty_c(\Omega_0)$. This equality can be read as
\begin{equation*}
	\int_{\Omega_0} b_n(x,Du_n) \cdot D v\\
	=\langle \Phi_n,v \rangle+\langle \mu_n,v\rangle,\quad \forall v\in C^\infty_c(\Omega_0),
\end{equation*}
where we have set
\begin{align}
	b_n(x,\xi) &:=H\Big(\frac{u_n(x)}{h}\Big)j_t(u_n(x),|\xi|)\frac{\xi}{|\xi|},\quad\text{for a.e.~$x\in\Omega_0$ and all $\xi\in\R^N$,}  \notag \\
	\label{misradon}
\langle \mu_n,v\rangle &:=	
	-\int_{\Omega_0}\left[H'\Big(\frac{u_n}{h}\Big)j_t(u_n,|D u_n|)\frac{|D
	u_n|}{h}+
	H\Big(\frac{u_n}{h}\Big)j_s(u_n,|D u_n|)\right]v,   \\
\langle \Phi_n,v \rangle &:=
	\int_{\Omega_0} f(|x|,u_n)H\Big(\frac{u_n}{h}\Big)v+\lambda_n\int_{\Omega_0} g(u_n)H\Big(\frac{u_n}{h}\Big)v, \notag
\end{align}
for every $v\in C^\infty_c(\Omega_0)$. Set also
\begin{align*}
b(x,\xi) &:=H\Big(\frac{u^*(x)}{h}\Big)j_t(u^*(x),|\xi|)\frac{\xi}{|\xi|},\quad\text{for a.e.~$x\in\Omega_0$ and all $\xi\in\R^N$},  \\
\langle \Phi,v \rangle &:=
	\int_{\Omega_0} f(|x|,u^*)H\Big(\frac{u^*}{h}\Big)v+\lambda\int_{\Omega_0} g(u^*)H\Big(\frac{u^*}{h}\Big)v,
	\quad \forall v\in C^\infty_c(\Omega_0),
\end{align*}
where $\lambda$ denotes the limit of $(\lambda_n)$, according to Step II.
Notice that $(\Phi_n)\subset W^{-1,p'}(\Omega_0)$ and $(\mu_n)$ defines a sequence 
of Radon measures on $\Omega_0$.
Taking into account the strict convexity and monotonicity of $\{t\mapsto j(s,t)\}$ 
and the growth conditions~\eqref{j3}-\eqref{j4}, we claim that the operators 
$b,b_n$ satisfy the following properties:
\begin{align}
	\label{1}
	& (b_n(x,\xi)-b_n(x,\xi'))\cdot(\xi-\xi')\geq 0,
	\quad\text{a.e.~$x\in\Omega_0$, for all $\xi,\xi'\in\R^N$,} \\
		\label{2}
	& (b(x,\xi)-b(x,\xi'))\cdot(\xi-\xi')>0,
	\quad\text{a.e.~$x\in\Omega_0$ with $u^*(x)\leq h$, for all $\xi\neq\xi'$}, \\
		\label{3}
&	b_n(x,\cdot)\to b(x,\cdot)\,\,\,\text{as $n\to\infty$}, 
\quad\text{a.e.~$x\in\Omega_0$, uniformly over compact sets of $\R^N$}, \\
	\label{4}
&	b_n(x,Du_n) \quad\text{is bounded in $L^{p'}(\Omega_0,\R^N)$}, \\
	\label{5}
&	b_n(x,Du^*)\to b(x,Du^*)\,\,\,\text{as $n\to\infty$}, 
\quad\text{strongly in $L^{p'}(\Omega_0,\R^N)$},  \\
\label{5piu}
& \mu_n \rightharpoonup \mu\,\,\,\text{as $n\to\infty$}, 
\quad\text{weakly* in measure, for some Radon measure $\mu$},  \\
\label{6}
&  \Phi_n \to \Phi\,\,\text{as $n\to\infty$}, 
\quad\text{strongly in $W^{-1,p'}(\Omega_0)$}.
\end{align}
Properties~\eqref{1} and~\eqref{2} follow from the strict convexity of the map $\{\xi\mapsto j(s,|\xi|)\}$
and the definition of $H$.
Concerning~\eqref{3}, given $x\in\Omega_0$ and a compact $K\subset\R^N$, again
by the definition of $H$ and the continuity of $j_t,j_{st}$, for all $\xi\in K$ 
and all $n\geq 1$ large,
\begin{align*}
	& |b_n(x,\xi)- b(x,\xi)|  \leq \big|H\big(\frac{u_n(x)}{h}\big)-H\big(\frac{u^*(x)}{h}\big)\big||j_t(u_n(x),|\xi|)| \\
& +H\big(\frac{u^*(x)}{h}\big)|j_t(u_n(x),|\xi|)-j_t(u^*(x),|\xi|)| \\
& \leq \sup_{s\in [0,2h+2],\xi\in K} |j_t(s,|\xi|)|\big|H\big(\frac{u_n(x)}{h}\big)-H\big(\frac{u^*(x)}{h}\big)\big| \\
& +H\big(\frac{u^*(x)}{h}\big)|j_{st}(\tau u_n(x)+(1-\tau)u^*(x),|\xi|)||u_n(x)-u^*(x)| \\
& \leq \sup_{s\in [0,2h+2],\xi\in K} |j_t(s,|\xi|)|\times\big|H\big(\frac{u_n(x)}{h}\big)-H\big(\frac{u^*(x)}{h}\big)\big| \\
& +\sup_{s\in [0,2h+1],\xi\in K} |j_{st}(s,|\xi|)|\times |u_n(x)-u^*(x)| \\
& \leq C_{h,K}\Big\{\big|H\big(\frac{u_n(x)}{h}\big)-H\big(\frac{u^*(x)}{h}\big)\big|+|u_n(x)-u^*(x)|\Big\},
\end{align*}
which yields the assertion. Conclusion~\eqref{4} follows from the inequality
$$
|b_n(x,Du_n)|^{p'}=\Big|H\Big(\frac{u_n(x)}{h}\Big)j_t(u_n(x),|Du_n|)\Big|^{p'}\leq (\gamma (2h))^{p'}|Du_n|^p,
$$
for a.e.~$x\in\Omega_0$.
Moreover, since $b_n(x,Du^*(x))\to b(x,Du^*(x))$ a.e.~in $\Omega_0$ and 
$$
|b_n(x,Du^*)|^{p'}=\Big|H\Big(\frac{u_n(x)}{h}\Big)j_t(u_n(x),|Du^*|)\Big|^{p'}\leq (\gamma (2h))^{p'}|Du^*|^p,
$$
for a.e.~$x\in\Omega_0$,
\eqref{5} holds as well by dominated convergence. Concerning~\eqref{5piu},
it can be easily verified that the square bracket in~\eqref{misradon} 
is a bounded sequence in $L^1(\Omega_0)$ (just argue as in the estimation
	of the $I_i^n$s), so that, up to a subsequence, the property holds. 
	Let us now prove that~\eqref{6} holds. In fact,
	for all $v\in W^{1,p}_0(\Omega_0)$ such that $\|v\|_{W^{1,p}(\Omega_0)}\leq 1$, we have
	\begin{align*}
	|\langle \Phi_n-\Phi,v \rangle| &\leq C 
	\|f(|x|,u_n)H\big(\frac{u_n}{h}\big)-f(|x|,u^*)H\big(\frac{u^*}{h}\big)\|_{L^\frac{p^*}{p^*-1}(\Omega_0)} \\
	&+C\lambda_n\|g(u_n)H\big(\frac{u_n}{h}\big)-g(u^*)H\big(\frac{u^*}{h}\big)\|_{L^\frac{p^*}{p^*-1}(\Omega_0)} \\
	&+C|\lambda_n-\lambda|\|g(u^*)H\big(\frac{u^*}{h}\big)\|_{L^\frac{p^*}{p^*-1}(\Omega_0)}.
	\end{align*}
	Since $f(|x|,u_n)H\big(\frac{u_n}{h}\big)-f(|x|,u^*)H\big(\frac{u^*}{h}\big)\to 0$ and
	$g(u_n)H\big(\frac{u_n}{h}\big)-g(u^*)H\big(\frac{u^*}{h}\big)\to 0$ as $n\to\infty$, a.e.~in $\Omega_0$, 
	$\lambda_n\to\lambda$ as $n\to\infty$ and, by the growth assumptions~\eqref{gg1} and~\eqref{gg1-G},
	\begin{align*}
	& |f(|x|,u_n)H\big(\frac{u_n}{h}\big)-f(|x|,u^*)H\big(\frac{u^*}{h}\big)|^\frac{p^*}{p^*-1}\leq C_h\, a^\frac{p^*}{p^*-1}(|x|)+C_h',
	\quad\text{a.e.~in $\Omega_0$},\\
	& |g(u_n)H\big(\frac{u_n}{h}\big)-g(u^*)H\big(\frac{u^*}{h}\big)|^\frac{p^*}{p^*-1}\leq C_h{''},
	\quad\text{a.e.~in $\Omega_0$},
\end{align*}
	for some $C_h,C_h',C_h^{''}>0$, we obtain the
	property by taking the supremum on $v$ and dominated convergence.
	Therefore, in light of~\eqref{weakustarbdd} and~\eqref{1}-\eqref{6},
	we can apply~\cite[Theorem 5]{masmur}, yielding the almost everywhere
	convergence of the gradients $Du_n$ to $Du^*$ on the set
	$$
	E_{h,\Omega_0}=\{x\in \Omega_0: u^*(x)\leq h\}.
	$$
	We deduce the pointwise convergence~\eqref{convun2} by the arbitrariness of $h\geq 1$ and $\Omega_0\subset\Omega$. 
\vskip4pt
\noindent
{\bf Step IV (proof of the theorem concluded).} In view of~\eqref{convun2} and~\eqref{j2}, we have
\begin{align*}
	& j(u_n,|D u_n|)-\alpha_0 |D u_n|^p\geq 0,\qquad\text{for all $n\geq 1$ and a.e.~in $\Omega$},  \\	
	& j(u_n,|D u_n|)-\alpha_0 |D u_n|^p\to j(u^*,|D u^*|)-\alpha_0 |D u^*|^p,\quad\text{as $n\to\infty$, a.e.~in $\Omega$}.
\end{align*}
	Taking into account~\eqref{energyconv-final}, by Fatou's lemma, we get
	$$
	\limsup_n\int_\Omega |D u_n|^p\leq \int_\Omega |D u^*|^p.
	$$
	Since $(u_n)$ converges to $u^*$ strongly in $L^p(\Omega)$
	and weakly in $W^{1,p}_0(\Omega)$, we can conclude that $u_n\to u^*$ strongly in $W^{1,p}_0(\Omega)$, as $n\to\infty$.
	Taking the limit into~\eqref{gradientpp} we reach
	\begin{equation*}
	\|Du\|_{L^p(\Omega)}=\|Du^*\|_{L^p(\Omega)}.
\end{equation*}
	Then, by~Proposition~\ref{identityc}, there is a translate of $u^*$ which is almost
	everywhere equal to $u$. 
\qed

\vskip10pt
\noindent
{\bf Acknowledgment.} The author thanks Marco Degiovanni for a helpful discussion
about the proof of Lemma~\ref{subdcaract}.

\bigskip

\end{document}